\numberwithin{equation}{section}
\crefname{equation}{}{}
\Crefname{equation}{}{}
\newtheorem{theorem}{Theorem}[section]
\newtheorem{lemma}[theorem]{Lemma}
\newtheorem{proposition}[theorem]{Proposition}
\newtheorem{corollary}[theorem]{Corollary}
\theoremstyle{definition}
\theoremstyle{remark}
\newtheorem*{remark}{Remark}
\newcommand{\N}{\mathbb{N}}
\newcommand{\Z}{\mathbb{Z}}
\newcommand{\R}{\mathbb{R}}
\newcommand{\C}{\mathbb{C}}
\renewcommand{\S}{\mathbb{S}}
\newcommand{\g}{\mathfrak{g}}
\newcommand{\G}{\mathcal{G}}
\renewcommand{\S}{\mathcal{S}}
\DeclarePairedDelimiter\norm{\lVert}{\rVert}
\renewcommand{\Im}{\operatorname{Im}}
\DeclareMathOperator{\supp}{supp}
\DeclareMathOperator{\id}{id}
\newcommand{\sloc}{\mathrm{sloc}}
   \def\MR#1{}
\begin{document}

\title[Spectral multipliers on Heisenberg type groups]
{An $L^p$-spectral multiplier theorem with sharp $p$-specific regularity bound on Heisenberg type groups}
\author{Lars Niedorf}
\address{Mathematisches Seminar, Christian-Albrechts-Universität zu Kiel, Heinrich-Hecht-Platz 6, 24118 Kiel, Germany}
\subjclass[2020]{43A22, 22E30, 42B15, 43A85}
\keywords{Nilpotent Lie group, Heisenberg type group, sub-Laplacian, spectral multiplier, restriction type estimate, sub-Riemannian geometry}
\email{niedorf@math.uni-kiel.de}
\date{April 9, 2024}
\thanks{This version of the article has been accepted for publication, after peer review but is not the Version of Record and does not reflect post-acceptance improvements, or any corrections. The Version of Record is available online at: \url{http://dx.doi.org/10.1007/s00041-024-10075-1}.\\
The author gratefully acknowledges the support by the Deutsche Forschungsgemeinschaft (DFG) through grant MU 761/12-1.}

\maketitle

\begin{abstract}
We prove an $L^p$-spectral multiplier theorem for sub-Laplacians on Heisenberg type groups under the sharp regularity condition $s>d\left|1/p-1/2\right|$, where $d$ is the topological dimension of the underlying group. Our approach relies on restriction type estimates where the multiplier is additionally truncated along the spectrum of the Laplacian on the center of the group.
\end{abstract}

\section{Introduction}

\subsection{Statement of the main result}

Let $G$ be a two-step stratified Lie group, that is, a connected, simply connected, two-step nilpotent Lie group whose Lie algebra~$\g$ admits a decomposition $\g=\g_1\oplus \g_2$ with $[\g_1,\g_1]=  \g_2$ and $\g_2\subseteq \g$ being contained in the center of $\g$. Let $X_1,\dots,X_{d_1}$ be a basis of $\g_1$. This basis can be identified with a system of left-invariant vector fields on $G$ via the Lie derivative. Let $L$ be the sub-Laplacian associated with the vector fields $X_1,\dots,X_{d_1}$, that is, the second order, left-invariant differential operator given by
\[
L=-(X_1^2+\dots + X_{d_1}^2).
\]
This operator is positive and self-adjoint on $L^2(G)$, where $G$ is endowed with a left-invariant Haar measure. Via functional calculus, one can define for every Borel measurable function $F:\R\to \C$ the operator $F(L)$ on $L^2(G)$, which is a bounded operator on $L^2(G)$ whenever the spectral multiplier $F$ is bounded. Regarding multipliers $F$ for which $F(L)$ extends to a bounded operator on $L^p(G)$, sufficient conditions can be given in terms of differentiability properties of the multiplier $F$, usually expressed by the scale-invariant localized Sobolev norms $\Vert\cdot\Vert_{L^2_{s,\sloc}}$, $s\ge 0$, given by
\[
\norm{F}_{L^2_{s,\sloc}} := \sup_{t>0}{ \norm{ F(t\,\cdot\,) \eta }_{L_s^2(\R)} }.
\]
Here, $\eta:\R\to\C$ is a smooth non-zero function with compact support in $(0,\infty)$ and $L_s^2(\R)\subseteq L^2(\R)$ denotes the Sobolev space of (fractional) order $s\ge 0$. Due to a celebrated theorem of Christ \cite{Ch91} and of Mauceri and Meda \cite{MaMe90}, $F(L)$ extends to a bounded operator on all $L^p$-spaces for $1<p<\infty$ whenever
\[
\norm{F}_{L^2_{s,\sloc}} < \infty\quad \text{for some } s>Q/2,
\]
where $Q=\dim \g_1+2\dim \g_2$ is the homogeneous dimension of the underlying Lie group. Moreover, $F(L)$ is of weak type $(1,1)$, i.e., bounded from $L^1(G)$ to the Lorentz space $L^{1,\infty}(G)$. (Actually, the theorem holds true for stratified Lie groups of arbitrary step, but our focus lies on stratified Lie groups of step two.)

In the case of Heisenberg (-type) groups, Müller and Stein \cite{MueSt94}, and independently Hebisch \cite{He93} showed that the threshold $s>Q/2$ can even be pushed down to $s > d/2$, where $d$ is the topological dimension of the underlying group. This result has been extended to other specific classes of two-step stratified Lie groups \cite{Ma12,MaMue14b,Ma15} (and also to other settings, cf.\ \cite{CoSi01,MaMue14a,CaCoMaSi17,AhCoMaMue20}), but up to now, it is still open whether the threshold $s>d/2$ is sufficient for any two-step stratified Lie group. However, Martini and Müller \cite{MaMue16} were able to show that for all two-step stratified Lie groups and left-invariant sub-Laplacians, the sharp threshold is strictly less than $Q/2$, but not less than $d/2$.

On the other hand, instead of asking for boundedness on all $L^p$-spaces for $1<p<\infty$ simultaneously, one can ask for the minimal threshold $s_p\in [0,d/2]$ such that $F(L)$ is bounded on $L^p$ whenever $\norm{F}_{L^2_{s,\sloc}} < \infty$ for some $s>s_p$. In \cite{MaMueNi19}, Martini, Müller, and Nicolussi Golo showed for a large class of smooth second-order real differential operators associated with a sub-Riemannian structure on smooth $d$-dimensional manifolds that at least regularity of order $s \ge d\left |1/p-1/2\right|$ is necessary for having $L^p$-spectral multiplier estimates. On the opposite, one expects this threshold also to be essentially sufficient. Sufficiency results featuring the regularity condition $s>Q\left|1/p-1/2\right|$, where $Q\ge d$ is the homogeneous dimension, are available in various settings \cite{ChOuSiYa16,LeRoSe14,Se86}, but beyond the Euclidean setting (where $d=Q$), to the best of my knowledge, sufficiency of the threshold $s \ge d\left |1/p-1/2\right|$ has so far only been proven in exceptional cases, see \cite{ChOu16,Ni21}.

The purpose of this paper is to extend the results for Grushin operators of \cite{Ni21} to sub-Laplacians on Heisenberg type groups, which is the following subclass of two-step stratified Lie groups:  Suppose that $\g$ is endowed with an inner product $\langle \cdot,\cdot\rangle$ for which the stratification $\g=\g_1\oplus\g_2$ is orthogonal. Let $\g_2^*$ denote the dual of $\g_2$. For any $\mu\in \g_2^*$, we have a skew-symmetric bilinear form $\omega_\mu : \g_1\times \g_1\to \R$ given by
\begin{equation}\label{eq:symplectic-form}
\omega_\mu(x,x'):=\mu([x,x']),\quad x,x'\in\g_1.
\end{equation}
Then, for any $\mu\in\g_2^*$, there is a skew-symmetric endomorphism $J_\mu$ on $\g_1$ such that $\omega_\mu(x,x') = \langle J_\mu x,x' \rangle$ for all $x,x'\in\g_1$. Let $|\cdot|$ denote the norm on $\g_2^*$ induced by the inner product $\langle\cdot,\cdot\rangle$. Then the group $G$ is called a \textit{Heisenberg type group} if $J_\mu$ is orthogonal for $|\mu|=1$, that is,
\[
J_\mu^2 = - |\mu|^2\id_{\g_1} \quad\text{for all } \mu\in\g_2^*.
\]
Note that this implies in particular that $\g_2=[\g_1,\g_1]$ is the center of $\g$.
Given any orthonormal basis $X_1,\dots,X_{d_1}$ of $\g_1$ with respect to the inner product $\langle\cdot,\cdot\rangle$, we consider the associated sub-Laplacian
\begin{equation}\label{def:sub-Laplace}
L=-(X_1^2+\dots + X_{d_1}^2).
\end{equation}
Our main result is the following spectral multiplier estimate, together with a corresponding result for Bochner--Riesz multipliers. By the result of \cite{MaMueNi19}, the threshold $s > d \left(1/p - 1/2\right)$ is optimal up to the endpoint and cannot be decreased. Note that the required order of regularity in the second part of \cref{thm:main} is the same as in the Bochner--Riesz conjecture, see for example \cite{Ta99}. However, note also that \cref{thm:main} only makes a statement in the range $1\le p \le 2(d_2+1)/(d_2+3)$, where $d_2$ is the dimension of the center of the group. 

\begin{theorem}\label{thm:main}
Let $G$ be a Heisenberg type group of topological dimension $d=\dim G$ and center of dimension $d_2$, and let $L$ be a sub-Laplacian as in \cref{def:sub-Laplace}. Suppose that $1\le p \le 2(d_2+1)/(d_2+3)$. Then the following statements hold:
\begin{enumerate}
\item \label{main-(1)}%
If $p>1$ and if $F:\R\to \C$ is a bounded Borel function such that
\[
\norm{F}_{L^2_{s,\sloc}} < \infty
\quad\text{for some } s > d \left(1/p - 1/2\right),
\]
then the operator $F(L)$ is bounded on $L^p(G)$, and
\[
\norm{F(L)}_{L^p\to L^p} \le C_{p,s} \norm{F}_{L^2_{s,\sloc}}.
\]
\item \label{main-(2)}%
For any $\delta> d \left(1/p - 1/2\right)- 1/2$, the Bochner--Riesz means $(1-tL)^\delta_+$, $t\ge 0$, are uniformly bounded on $L^p(G)$.
\end{enumerate}
\end{theorem}

If $s>d/2$ in the first part of \cref{thm:main}, then the operator $F(L)$ is of weak type $(1,1)$ by \cite{He93}. The condition $1\le p \le 2(d_2+1)/(d_2+3)$ in \cref{thm:main} derives from the Stein-Tomas restriction estimate on $\g_2$, which is used for the proof of the spectral multiplier estimates. Due to this assumption, the first part of the theorem only gives results when $d_2\ge 2$, which means that $G$ must be a Heisenberg type group which is not a Heisenberg group. This reflects the phenomenon that there are no good analogues of Fourier restriction estimates available in the case of Heisenberg groups, except for the trivial $L^1$-$L^\infty$ estimates, which is due to the fact that the Heisenberg group admits only a one-dimensional center \cite{Mue90}. It remains an open question whether such $p$-specific spectral multiplier estimates as above also hold in the setting of Heisenberg groups.

The spectral multiplier results of \cref{thm:main} are extended in a follow-up paper \cite{Nie23b} to the class of Métivier groups. However, the approach of \cite{Nie23b} requires additional methods due to the fact that the matrices $J_\mu$ from above are no longer orthogonal and the spectral decomposition into eigenspaces, which depend on $\mu$, is more complicated.

\subsection{Structure of the proof}

Building on methods of \cite{CaCi13,ChOu16,Ni21}, the proof of the spectral multiplier estimates of \cref{thm:main} relies on restriction type estimates, a fundamental connection that was first discovered by C.\ Fefferman \cite{Fe73} and has since then been exploited by many other authors, see \cite{SeSo89,GuHaSi13,ChOuSiYa16}. The key idea can be illustrated as follows. Suppose that we want to derive $L^p$-boundedness of the Bochner--Riesz means $(1-L)_+^\delta$. If $F:\R\to\C$ is the multiplier given by
\[
F(\lambda)=(1-\lambda^2)^\delta_+,\quad \lambda\in\R,
\]
then $F(\sqrt L)=(1-L)_+^\delta$. We decompose $F$ as
\[
F = \sum_{\iota = 0}^\infty F^{(\iota)},
\]
where $|\tau|\sim 2^\iota$ whenever $\tau \in\supp \widehat{F^{(\iota)}}$ for $\iota\ge 1$. It suffices to show
\[
\norm{F^{(\iota)}(\sqrt L)}_{p\to p}\lesssim 2^{-\varepsilon\iota}
\quad\text{for some }\varepsilon>0.
\]
Let $\mathcal K^{(\iota)}$ be the convolution kernel associated with $F^{(\iota)}(\sqrt L)$, that is,
\[
F^{(\iota)}(\sqrt L) f = f * \mathcal K^{(\iota)} \quad\text{for } f\in\mathcal S(G),
\]
where $*$ denotes the group convolution. By the Fourier inversion formula, we have
\[
F^{(\iota)} ( \sqrt L) = \frac{1}{2\pi} \int_{|\tau|\sim 2^\iota} \chi(\tau/2^\iota) \hat F(\tau) \cos(\tau \sqrt L) \,d\tau,
\]
where $\chi:\R\to\C$ is some smooth function which is compactly supported in $(0,\infty)$. Thanks the frequency localization of the dyadic pieces $F^{(\iota)}$, since solutions of the wave equation associated with $L$ possess finite propagation speed, the convolution kernel $\mathcal K^{(\iota)}$ is supported in a Euclidean ball of dimension $B_R \times B_{R^2}\subseteq \R^{d_1}\times \R^{d_2}$ centered at the origin with radius $R\sim 2^\iota$. Now having a restriction type estimate which is essentially of the form
\begin{equation}\label{intro-restriction-1}
\norm{F^{(\iota)}(\sqrt L)f}_2  \lesssim \norm{F^{(\iota)}}_2 \norm{f}_p,
\end{equation}
then, given a function $f$ with $\supp f\subseteq B_R \times B_{R^2}$ (which one can assume without loss of generality), Hölder's inequality provides
\begin{equation}\label{eq:intro-hoelder-1}
\norm{F^{(\iota)}(\sqrt L)f}_p \lesssim 
 R^{Q/q} \norm{F^{(\iota)}(\sqrt L)f}_2
 \lesssim R^{Q/q} \norm{F^{(\iota)}}_2 \norm{f}_p,
\end{equation}
where $Q=d_1+2d_2$ and $1/q=1/p-1/2$. On the other hand, one can show that $F\in L^2_{Q/q+\varepsilon}(\R)$ for some $\varepsilon>0$ if $\delta>Q/q-1/2$, whence the last term of \cref{eq:intro-hoelder-1} can be estimated via
\[
R^{Q/q} \norm{F^{(\iota)}}_2 \norm{f}_p
 \lesssim 2^{-\varepsilon\iota} \norm{F^{(\iota)}}_{L^2_{Q/q+\varepsilon}(\R)} \norm{f}_p.
\]
However, this approach does in general only provide thresholds featuring the homogeneous dimension $Q$ in place of the topological one. Employing the approach of \cite{Ni21}, we additionally decompose the multipliers $F^{(\iota)}$ dyadically along the spectrum of the Laplacian
\[
U = (-(U_1^2+\dots+U_{d_2}^2))^{1/2},
\]
where $U_1,\dots,U_{d_2}$ is a basis of the second layer $\g_2$ of the Lie algebra $\g$. More precisely, we decompose $F^{(\iota)}$ such that
\begin{equation}\label{eq:intro-truncation}
F^{(\iota)}(\sqrt L) = \sum_{\ell = 0}^\infty F^{(\iota)}(\sqrt L) \chi_\ell(L/U),
\end{equation}
where $(\chi_\ell)_{\ell\in\Z}$ is a dyadic decomposition of $\R\setminus\{0\}$ such that $|\lambda|\sim 2^\ell$ if $\lambda\in\supp \chi_\ell$. The convolution kernel $\mathcal K^{(\iota)}_\ell$ of the operator $F_\ell^{(\iota)}(L,U) = F^{(\iota)}(\sqrt L) \chi_\ell(L/U)$ can be explicitly written down in terms of the Fourier transform and rescaled Laguerre functions $\varphi_k^{|\mu|}$ (for their definition, see \cref{eq:laguerre-function-1} below), namely,
\begin{align}
\mathcal K^{(\iota)}_\ell(x,u) = (2\pi)^{-d_2} \sum_{k=0}^\infty \int_{\g_2^*\setminus\{0\}} & F^{(\iota)}\big(\sqrt{[k]|\mu|}\big) \chi_\ell([k]) \varphi_k^{|\mu|}(x) e^{i\langle \mu, u\rangle} \, d\mu, \label{eq:intro-conv-kernel}
\end{align}
where $[k]:=2k+d_1/2$. Thus, due to the factor $\chi_\ell([k])$ in the integral, the truncation achieved by the function $\chi_\ell$ corresponds to taking in \cref{eq:intro-conv-kernel} only the summands with $[k]\sim 2^\ell$. Assuming in the following that $F^{(\iota)}$ is compactly supported around $1$ (which is of course in general not true in view of the Paley--Wiener theorem but can be achieved by a cut-off), we have $|\mu|\sim [k]^{-1}$ and thus $|\mu|\sim 2^{-\ell}$ on the support of $F_\ell^{(\iota)}$, whence the truncation afforded by the functions $\chi_\ell$ is also referred to as a \textit{truncation along the spectrum of $U$}. Pointwise estimates for Laguerre functions (cf.\ Eq.\ (1.1.44)\footnote{There is a small typo in Eq.\ (1.1.44): The factor $e^{-x}$ has to be replaced by $e^{-x/2}$.}, (1.3.41), and Lemma~1.5.3 of \cite{Th93} or alternatively the table on page 699 of \cite{AsWa65}) suggest that $\varphi_k(x):=\varphi_k^1(x)$ has exponential decay for $|x|\gtrsim [k]^{1/2}$. Hence, since $|\mu|\sim [k]^{-1}$, the function $\varphi_k^{|\mu|}$, which satisfies
\[
\varphi_k^{|\mu|}(x)=|\mu|^{d_1/2}\varphi_k(|\mu|^{1/2}x),
\]
is supported where $|x|\lesssim |\mu|^{-1/2} [k]^{1/2}\sim 2^\ell=:R_\ell$, up to some exponentially decaying term. This means that the kernel $\mathcal K^{(\iota)}_\ell$ is \textit{essentially} supported in a Euclidean ball of dimension $B_{R_\ell}\times B_{R^2}\subseteq \R^{d_1}\times \R^{d_2}$ centered at the origin. Now, instead of \cref{intro-restriction-1}, suppose we had a restriction type estimate of the form
\begin{equation}\label{eq:intro-restriction-2}
\norm{F_\ell^{(\iota)}(L,U)f}_2  \lesssim R_\ell^{-d_2/q} \norm{F^{(\iota)}}_2 \norm{f}_p.
\end{equation}
We distinguish the cases $0\le \ell\le \iota$ and $\ell>\iota$. In the former case, we decompose the ball $B_R \times B_{R^2}$ into a grid of balls $B_m^{(\ell)}$ of dimension $R_\ell \times R^2$ with respect to the Euclidean distance on the layers $\g_1$ and $\g_2$, respectively. Correspondingly, we decompose the function $f$ supported in $B_R \times B_{R^2}$ into a sum of functions $f_m$ supported in balls of dimension $R_\ell \times R^2$. Then Hölder's inequality, applied with $1/p=1/q+1/2$, combined with the restriction type estimate \cref{eq:intro-restriction-2} would provide
\begin{align}
\norm{F_\ell^{(\iota)}(L,U)f_m}_p
& \lesssim 
 (R_\ell^{d_1} R^{2d_2})^{1/q} \norm{F_\ell^{(\iota)}(L,U)f_m}_2 \notag \\
& \lesssim (R_\ell^{d_1-d_2} R^{2d_2})^{1/q} \norm{F_\ell^{(\iota)}}_2 \norm{f_m}_p.
\label{eq:intro-hoelder-2}
\end{align}
Since $G$ is a Heisenberg type group, we have $d_2<d_1$. Together with $\ell\le \iota$, we obtain
\[
R_\ell^{d_1-d_2} R^{2d_2} \sim 2^{(\ell-\iota)(d_1-d_2)} 2^{\iota(d_1+d_2)} \le 2^{\iota(d_1+d_2)}.
\]
Hence we may estimate the last term of \cref{eq:intro-hoelder-2} by
\[
R^{d/q} \norm{F^{(\iota)}}_2 \norm{f}_p
 \lesssim 2^{-\varepsilon\iota} \norm{F^{(\iota)}}_{L^2_{d/q+\varepsilon}(\R)} \norm{f}_p.
\]
The case $\ell>\iota$ can be treated similarly by using \cref{eq:intro-hoelder-1} in conjunction with \cref{eq:intro-restriction-2} and summing the geometric series over all $\ell>\iota$. As a consequence, only regularity $\delta>d/q-1/2$ of the multiplier $F(\lambda)=(1-\lambda^2)_+^\delta$ is necessary for the $L^p$-boundedness of the Bochner--Riesz mean $F(\sqrt L)=(1-L)_+^\delta$. Thus, at least by our heuristics, the truncation of \cref{eq:intro-truncation} ultimately provides thresholds with the topological dimension instead of the homogeneous one.

It should be emphasized that the present approach benefits from the fact that the dimension $d_2$ of the second layer $\g_2$ of the Lie algebra $\g$ is smaller than the dimension of the first layer. Doing the decomposition in the first layer reflects a phenomenon that has already been prominent in the setting of Grushin operators $-\Delta_x-|x|^2\Delta_u$, $(x,u)\in \R^{d_1}\times \R^{d_2}$, which are closely related to sub-Laplacians on Heisenberg type groups. In \cite{MaSi12}, Martini and Sikora proved a Mikhlin--Hörmander type result with threshold $s>D/2$ for Grushin operators, where $D:=\max\{d_1+d_2,2d_2\}$, which was later improved by Martini and Müller \cite{MaMue14a} to hold for the topological dimension $d$ in place of $D$. Both approaches rely on weighted Plancherel estimates for the associated integral kernels, but differ by the employed weights. While \cite{MaSi12} uses the weight $|x|^\gamma$ in the first layer in conjunction with a sub-elliptic estimate, \cite{MaMue14a} employs the weight $|u|^\gamma$ in the second layer. A similar phenomenon occurred later in the articles \cite{ChOu16} and \cite{Ni21}, where spectral multiplier theorems with $p$-specific regularity bounds were proved. The approach of Chen and Ouhabaz \cite{ChOu16} relies on weighted restriction type estimates using the weight $|x|^\gamma$ in the first layer, while \cite{Ni21} employs the weighted Plancherel estimate of \cite{MaMue14a} with weight in the center to express support conditions of integral kernels. In accordance with the phenomenon of \cite{MaSi12} and \cite{MaMue14a}, the result of \cite{ChOu16} needs $s>D\left(1/p-1/2\right)$ as regularity condition, while \cite{Ni21} only needs regularity of order $s>d\left(1/p-1/2\right)$. The present article relies on the same key idea as in \cite{Ni21}, but uses instead a weighted Plancherel estimate featuring the weight $|x|^\gamma$. However, this approach still provides the optimal threshold $s>d\left(1/p-1/2\right)$ in \cref{thm:main} and is in line with the phenomena described above, since $d_2< d_1$ and thus $D=d$, due to the fact that $G$ is a Heisenberg type group.

\subsection{Structure of the paper} \Cref{sec:spectral-theory} and \ref{sec:cc} are preliminary sections dealing with the spectral theory and sub-Riemannian geometry of sub-Laplacians. In \cref{sec:rest}, we prove the previously mentioned truncated restriction type estimates. \Cref{sec:weighted-plancherel} is devoted to proving a weighted Plancherel estimate with weight $|x|^\gamma$. In \cref{sec:abstract} we reduce the proof of \cref{thm:main} to spectral estimates for multipliers whose Fourier transform is supported on dyadic scales. The proof of this reduced version of \cref{thm:main} is given in \cref{sec:main-proof}. \Cref{sec:weighted} is an additional section, where we show that a sub-elliptic estimate for the sub-Laplacian with the help of which one could directly transfer the approach of \cite{ChOu16} to the setting of Heisenberg type groups is in general false.

\subsection{Notation}
We let $\N=\{0,1,2,\dots\}$. The space of (equivalence classes of) integrable simple functions on a two-step stratified Lie group $G$ will be denoted by $D(G)$, while $\S(G)$ shall denote the space of Schwartz functions on $G\cong\R^d$. The indicator function of a subset $A$ of some measurable space will be denoted by $\textbf{1}_A$. Given two suitable functions $f_1,f_2$ on $G$, let $f*g$ denote the group convolution given by
\[
f*g(x,u) = \int_{G} f(x',u')\,g\big((x',u')^{-1}(x,u)\big) \,d(x',u'),\quad (x,u)\in G,
\]
where $d(x',u')$ denotes the Lebesgue measure on $G$. For a function $f\in L^1(\R^n)$, the Fourier transform $\hat f$ is defined by
\[
\hat f(\xi) = \int_{\R^n} f(x) e^{-i\xi x}\, dx,\quad
\xi\in\R^n,
\]
while the inverse Fourier transform $\check f$ is given by
\[
\check f(x) = (2\pi)^{-n} \int_{\R^n} f(\xi) e^{ix\xi}\, d\xi,\quad
x\in\R^n.
\]
We write $A\lesssim B$ if $A\le C B$ for a constant $C$. If $A\lesssim B$ and $B\lesssim A$, we write $A\sim B$. Moreover, we fix the following dyadic decomposition throughout this article: Let $\chi:\R\to [0,1]$ be an even and smooth function such that $1/2\le |\lambda|\le 2$ for all $\lambda\in\supp\chi$ and
\[
\sum_{j\in\Z} \chi_j(\lambda) = 1 \quad \text{for } \lambda\neq 0,
\]
where $\chi_j$ is given by
\begin{equation}\label{eq:dyadic}
\chi_j(\lambda)=\chi(\lambda/2^j) \quad \text{for }j\in\Z.
\end{equation}

\subsection{Acknowledgments} I am deeply grateful to my advisor Detlef Müller for constant support and numberless helpful suggestions. I am also very grateful to Alessio Martini for bringing an error in \cref{lem:spec-twisted} of an earlier version of this article to my attention. I would also like to thank the anonymous referee for carefully reading the paper and making a number of helpful suggestions.

\section{Spectral theory of sub-Laplacians on Heisenberg type groups}
\label{sec:spectral-theory}

Let $G$ be a two-step stratified Lie group. Via exponential coordinates, we may identify $G$ with its Lie algebra $\g$, which is the tangent space at the identity of $G$. Since $G$ is stratified of step~2, $\g$ can be decomposed as $\g=\g_1\oplus \g_2$ with $[\g_1,\g_1]=\g_2$ and $\g_2\subseteq \g$ being contained in the center of $\g$. Let $\dot\g_2^*=\g_2^*\setminus\{0\}$. For any $\mu\in \g_2^*$, let $\omega_\mu$ be the skew-symmetric bilinear form given by
\[
\omega_\mu(x,x')=\mu([x,x']), \quad x,x'\in\g_1.
\]
Then $G$ is called a \textit{Heisenberg type group} if there is an inner product $\langle \cdot,\cdot\rangle$ on $\g$ with respect to which the decomposition $\g=\g_1\oplus \g_2$ is orthogonal, and the skew-symmetric endomorphisms $J_\mu$ given by $\omega_\mu(x,x') = \langle J_\mu x,x' \rangle$ for all $x,x'\in\g_1$ satisfy
\[
J_\mu^2 = - |\mu|^2\id_{\g_1} \quad\text{for all } \mu\in\g_2^*,
\]
where $|\cdot|$ is the norm on $\g_2^*$ induced by the inner product $\langle \cdot,\cdot \rangle$.

For the rest of this section, we assume that $G$ is a Heisenberg type group. In particular, this implies that
\begin{itemize}
    \item $\dim\g_1$ is even,
    \item $\dim\g_2 < \dim\g_1$ (since $\g_2^*\to(\g_1/\R x')^*,\mu\to \omega_\mu(\cdot,x')$ is injective for $x'\neq 0$).
\end{itemize}
Let $d_1 = \dim \g_1$ and $d_2 = \dim \g_2$. We fix an orthonormal basis $X_1,\dots,X_{d_1}$ of~$\g_1$ and an orthonormal basis $U_1,\dots,U_{d_2}$ of $\g_2$. In the following, to simplify our notation, we identify $G$ and $\g=\g_1\oplus \g_2$ with $\R^{d_1}\times \R^{d_2}$ via the chosen basis. Note that the group multiplication is then given by
\[
(x,u)(x',u')=(x+x',u+u'+\tfrac 1 2 [x,x']),\quad x,x'\in \g_1,u,u'\in \g_2.
\]
As usual, the tangent space $\g$ is in turn identified with the Lie algebra of (smooth) left-invariant vector fields on $G$ via the Lie-derivative. Given a smooth function $f$ on $G$, we have
\begin{align*}
X_j f(x,u)
& = \frac{d}{dt} f((x,u)(tX_j,0))\big|_{t=0} \\
& = \partial_{x_j} f(x,u) + \frac 1 2 \sum_{k=1}^{d_2} \langle U_k,[x,X_j]\rangle \partial_{u_k} f(x,u),\\
U_k f(x,u)
& = \partial_{u_k} f(x,u).
\end{align*}
The sub-Laplacian $L$ associated with the vector fields $X_1,\dots,X_{d_1}$ is the second order differential operator given by
\[
L = -(X_1^2+\dots + X_{d_1}^2).
\]
For $f\in L^1(G)$ and $\mu\in\g_2^*$, let $f^\mu$ denote the $\mu$-section of the partial Fourier transform along the second layer $\g_2$ given by
\begin{equation}\label{eq:partial-ft-2}
f^\mu(x) = \int_{\g_2} f(x,u) e^{- i \langle\mu, u\rangle} \, du,\quad x\in \g_1.
\end{equation}
Up to some constant, this defines an isometry $\mathcal F_2:L^2(\g_1\times \g_2)\to L^2(\g_1\times \g_2^*)$. Given $f\in L^2(G)$, we also write $f^\mu=(\mathcal F_2 f)(\cdot,\mu)$ (for almost all $\mu\in\g_2^*$) in the following. For fixed $\mu\in\g_2^*$, we have
\[
(X_j f)^\mu = X_j^\mu f^\mu,
\]
where $X_j^\mu$ is the differential operator on $\g_1$ given by
\[
X_j^\mu = \partial_{x_j} + \tfrac i 2 \mu([x,X_j]).
\]
Let $L^\mu$ be the \textit{$\mu$-twisted Laplacian} on $\g_1$ given by
\begin{equation}\label{eq:twisted-laplace}
L^\mu = -((X_1^\mu)^2+\dots+(X_{d_1}^\mu)^2).
\end{equation}
Let $n:=d_1/2$ and $S:=\{\mu\in \g_2^* : |\mu|=1\}$. Note that $J_\mu$ is orthogonal with eigenvalues $\pm i$ for $\mu\in S$. Hence, for any $\mu\in\dot\g_2^*$, there is an orthogonal matrix $T_\mu=T_{\bar\mu}$ on $\g_1=\R^{2n}$, where $\bar\mu:=\mu/|\mu|\in S$, such that
\[
\omega_\mu(T_{\bar\mu}z,T_{\bar\mu}w) = |\mu| \omega(z,w)\quad \text{for all } z,w \in \R^{2n},
\]
where $\omega(z,w)=(Jz)^\top w$ is the standard symplectic form\footnote{A prominent choice in the literature is also the symplectic form $(z,w)\mapsto\Im(z\bar w)=-\omega(z,w)$, which would correspond to the transpose of the matrix in \cref{eq:standard-symplectic}. The reason for choosing $\omega$ as above instead of $(z,w)\mapsto\Im(z\bar w)$ is that we want to have the same sign for the third summand in \cref{eq:classical-twisted} as in (1.3.14) of \cite{Th93}.} on $\R^{d_1}$ which is induced by the $d_1\times d_1$ matrix
\begin{equation}\label{eq:standard-symplectic}
J=
\begin{pmatrix}
0 & -\id_{\R^n} \\
\id_{\R^n} & 0
\end{pmatrix}.
\end{equation}
By compactness of $S$, we may assume that the map $\dot\g_2^*\ni\mu\mapsto T_{\bar\mu}$ is measurable, see for instance \cite[Thm.~1]{Az74}. Let $v^\mu_j$ be the $j$-th column of $T_{\bar\mu}^{-1}$ and $\nabla$ denote the usual gradient on $\R^{2n}$. Then, given a smooth function $g$ on $\R^{2n}$, we obtain
\begin{align*}
X_j^\mu (g\circ T_{\bar\mu}^{-1}) (T_{\bar\mu} z)
& = \nabla g(z)v^\mu_j + \tfrac i 2 \omega_\mu(T_{\bar\mu} z, X_j) g(z) \\
& = \nabla g(z)v^\mu_j + \tfrac i 2 |\mu| \omega(z,v^\mu_j) g(z).
\end{align*}
Since $\sum_{j=1}^{d_1}v^\mu_j(v^\mu_j)^\top$ is the identity matrix on $\R^{d_1}$ thanks to orthogonality, the $\mu$-twisted Laplacian $L^\mu$ of \cref{eq:twisted-laplace} transforms into
\begin{equation}\label{eq:rotation-twisted}
L^\mu (g\circ T_{\bar\mu}^{-1}) (T_{\bar\mu} z) = L_0^{|\mu|} g(z),
\end{equation}
where $L_0^\lambda$ is the \textit{$\lambda$-twisted Laplacian} on $\R^{2n}$ given by
\begin{equation}\label{eq:classical-twisted}
L_0^\lambda = -\Delta_z + \tfrac 1 4 \lambda^2 |z|^2 - i\lambda \sum_{j=1}^n (a_j \partial_{b_j} - b_j \partial_{a_j}),\quad\lambda > 0,
\end{equation}
where we write $z\in\R^{2n}$ as $z=(a_1,\dots,a_n,b_1,\dots,b_n)$.
The $\lambda$-twisted Laplacian $L_0^\lambda$ admits a complete orthonormal system of eigenfunctions, which are given by the matrix coefficients of the Schrödinger representation, see \cite[Section 1.3]{Th93}. More precisely, let $\pi_\lambda:\mathbb H_n\to \mathcal U (L^2(\R^n))$, where $\mathcal U (L^2(\R^n))$ is the group of unitary operators on $L^2(\R^n)$, denote the Schrödinger representation of the Heisenberg group $\mathbb H_n=\R^{2n}\times \R$ on $L^2(\R^n)$ given by
\[
\pi_\lambda(a, b, t) \varphi(\xi) = e^{i \lambda t} e^{i \lambda(a \xi+\frac{1}{2} ab)} \varphi(\xi+b),
\]
where $a,b\in\R^n,t\in\R$ and $\varphi\in L^2(\R^n),\xi\in\R^n$. Moreover, let $\Phi_\nu^{\lambda}$ be the Hermite function defined by
\[
\Phi_\nu^\lambda(\xi) = \lambda^{n/4} \prod_{j=1}^n h_{\nu_j}(\lambda^{1/2} \xi_j),\quad \xi\in \R^n,
\]
where $h_\ell$ shall denote the $\ell$-th Hermite function on $\R$ given by
\[
h_\ell(t) = (-1)^\ell (2^\ell \ell! \sqrt{\pi})^{-1/2} e^{t^2/2} \Big(\frac{d}{dt}\Big)^\ell (e^{-t^2}),\quad t\in \R.
\]
Then, by Theorems 1.3.2 and 1.3.3 of \cite{Th93}, the matrix coefficients $\Phi_{\nu,\nu'}^\lambda$, $\nu,\nu'\in\N^n$ given by
\begin{equation}\label{eq:special-hermite}
\Phi_{\nu,\nu'}^\lambda(z) := (2\pi)^{-n/2} \lambda^{n/2} (\pi_\lambda(z,0) \Phi_\nu^\lambda, \Phi_{\nu'}^\lambda),\quad z\in\R^{2n}
\end{equation}
form a complete orthonormal system of eigenfunctions of $L_0^\lambda$, with 
\begin{equation}\label{eq:eigenfunction-1}
L_0^\lambda\Phi_{\nu,\nu'}^\lambda = (2|\nu'|_1+n) \lambda \Phi_{\nu,\nu'}^\lambda,
\end{equation}
where $(\cdot,\cdot)$ is the inner product on $L^2(\R^{2n})$, and $|\nu|_1=\nu_1+\dots+\nu_n$ denotes the length of the multiindex $\nu\in\N^n$. Hence $L^2(\R^{2n})$ decomposes into eigenspaces of $L_0^\lambda$, where the orthogonal projection $\Lambda_k^\lambda$ onto the eigenspace of the eigenvalue $(2k+n)\lambda$, $k\in\N$ is given by
\[
\Lambda_k^\lambda g = \sum_{\nu\in \N^n} \sum_{|\nu'|_1 =k} (g,\Phi_{\nu,\nu'}^\lambda)\Phi_{\nu,\nu'}^\lambda,\quad g \in L^2(\R^{2n}).
\]
Via the transformation $T_{\bar\mu}$, the spectral decomposition of the $\mu$-twisted Laplacian $L^\mu$ of \cref{eq:twisted-laplace} can be expressed in terms of the spectral decomposition of $L_0^\lambda$, which follows directly from \cref{eq:rotation-twisted} and \cref{eq:eigenfunction-1}. In the following, we put
\[
[k]:=2k+n, \quad k\in\N.
\]

\begin{lemma}\label{lem:spec-twisted}
For $\mu\in \dot \g_2^*$, the operator $L^\mu$ on $L^2(\g_1)$ admits an orthonormal basis of eigenfunctions associated with the eigenvalues $[k]|\mu|$, $k\in\N$. The orthogonal projection $\Pi_k^\mu$ onto the eigenspace of the eigenvalue $[k]|\mu|$ is given by
\begin{equation}\label{eq:func-calc-proj}
\Pi_k^\mu g = (\Lambda_k^{|\mu|} (g\circ T_{\bar\mu})) \circ T_{\bar\mu}^{-1}, \quad g\in L^2(\g_1).
\end{equation}
\end{lemma}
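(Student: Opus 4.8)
The plan is to reduce the spectral analysis of $L^\mu$ to that of the model operator $L_0^{|\mu|}$ via the orthogonal transformation $T_{\bar\mu}$, using the conjugation identity \cref{eq:rotation-twisted} that has already been established. Concretely, let $\lambda:=|\mu|$ and define the unitary operator $V_{\bar\mu}:L^2(\g_1)\to L^2(\R^{2n})$ by $V_{\bar\mu}g := g\circ T_{\bar\mu}^{-1}$; since $T_{\bar\mu}$ is orthogonal, $V_{\bar\mu}$ is indeed a unitary isomorphism of Hilbert spaces, with inverse $g\mapsto g\circ T_{\bar\mu}$. The identity \cref{eq:rotation-twisted} says precisely that $L^\mu(V_{\bar\mu}^{-1} g)(T_{\bar\mu}z) = L_0^\lambda g(z)$ for smooth $g$, i.e.\ $L^\mu V_{\bar\mu}^{-1} = V_{\bar\mu}^{-1} L_0^\lambda$ on the relevant dense domain, so $L^\mu$ and $L_0^\lambda$ are unitarily equivalent via $V_{\bar\mu}$. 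Unitary equivalence preserves the spectrum and intertwines the spectral projections: if $E_0(\cdot)$ denotes the projection-valued measure of $L_0^\lambda$, then $V_{\bar\mu}^{-1} E_0(\cdot) V_{\bar\mu}$ is the projection-valued measure of $L^\mu$.

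Next I would invoke the structure of $L_0^\lambda$ recalled from \cite[Section 1.3]{Th93}: by \cref{eq:eigenfunction-1} together with the completeness of the orthonormal system $\{\Phi_{\nu,\nu'}^\lambda\}_{\nu,\nu'\in\N^n}$ of eigenfunctions, the operator $L_0^\lambda$ has pure point spectrum, and the distinct eigenvalues are exactly the numbers $(2k+n)\lambda=[k]\lambda$ for $k\in\N$ (the value $[k]\lambda$ arises from all multi-indices $\nu$ with $|\nu|_1=k$, with arbitrary $\nu'$). Hence the spectral projection of $L_0^\lambda$ onto $\{[k]\lambda\}$ is the orthogonal projection onto $\overline{\operatorname{span}}\{\Phi_{\nu,\nu'}^\lambda : |\nu|_1=k,\ \nu'\in\N^n\}$, which is precisely $\Lambda_k^\lambda$ as written in the excerpt. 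Transporting this back through $V_{\bar\mu}$ gives that $L^\mu$ has discrete spectrum $\{[k]|\mu| : k\in\N\}$ and that the projection onto the eigenspace of $[k]|\mu|$ is $V_{\bar\mu}^{-1}\Lambda_k^{|\mu|}V_{\bar\mu}$, which unwinds to exactly formula \cref{eq:func-calc-proj}: $\Pi_k^\mu g = (\Lambda_k^{|\mu|}(g\circ T_{\bar\mu}))\circ T_{\bar\mu}^{-1}$.

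The only genuine subtlety — and the step I would be most careful about — is the operator-domain bookkeeping: \cref{eq:rotation-twisted} is stated for smooth $g$, so strictly speaking it exhibits unitary equivalence of the differential operators on a common core (e.g.\ Schwartz functions, or finite linear combinations of the $\Phi_{\nu,\nu'}^\lambda$, which lie in the domain and are mapped into it). One must note that $L^\mu$ is essentially self-adjoint on such a core — this follows since $L^\mu$ is the image of the (essentially self-adjoint) sub-Laplacian $L$ under the partial Fourier transform $\F_2$, or alternatively since $L_0^\lambda$ is essentially self-adjoint, having an orthonormal basis of eigenfunctions — and then the unitary equivalence of the closures follows automatically. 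Given this, everything else is a direct transcription of the Hermite/special-Hermite function facts from \cite{Th93}, and the formula \cref{eq:func-calc-proj} drops out of the intertwining relation $\Pi_k^\mu = V_{\bar\mu}^{-1}\Lambda_k^{|\mu|}V_{\bar\mu}$.
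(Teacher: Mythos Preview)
Your proof is correct and follows essentially the same route as the paper: both reduce to the known spectral theory of $L_0^{|\mu|}$ via the conjugation relation \cref{eq:rotation-twisted}, with the paper phrasing this concretely through the transported eigenfunctions $\Psi_{\nu,\nu'}^\mu := \Phi_{\nu,\nu'}^{|\mu|}\circ T_{\bar\mu}^{-1}$ while you phrase it abstractly as unitary equivalence. One small slip: with your convention $V_{\bar\mu}g = g\circ T_{\bar\mu}^{-1}$, the expression $V_{\bar\mu}^{-1}\Lambda_k^{|\mu|}V_{\bar\mu}$ unwinds to $(\Lambda_k^{|\mu|}(g\circ T_{\bar\mu}^{-1}))\circ T_{\bar\mu}$ rather than \cref{eq:func-calc-proj}; the correct choice (consistent with \cref{eq:rotation-twisted}) is $V_{\bar\mu}g = g\circ T_{\bar\mu}$.
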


\begin{remark}
\Cref{lem:spec-twisted} is essentially Proposition 4.5 of \cite{CaCi13}. Note, however, that the definitions of $\Lambda_k^\lambda$ and $\Pi_k^\mu$ in \cite{CaCi13} differ from ours by the factors $\lambda^n$ and $|\mu|^n$. Moreover, \cite{CaCi13} even states that the above result holds for left-invariant sub-Laplacians on the larger class of Métivier groups. Unfortunately, this is not the case in general, since the eigenvalues of the twisted Laplacian $L^\mu$ are not necessarily of the form $[k]|\mu|$ if $G$ is only assumed to be a Métivier group. An example may be found for instance in \cite[Eq.\ (2.3), (2.4)]{MueSt94}, where the corresponding eigenvalues are of the form
\[
\sum_{j=1}^n a_j(2\nu_j+1)|\mu|,\quad  \nu\in\N^n.
\]
See also \cite[Sec.\ 2]{MaMue14b} for a further discussion. However, the results of \cite{CaCi13} remain true under the additional hypothesis that $G$ is a Heisenberg type group, and the restriction theorem of \cite{CaCi13} can be expected to hold in greater generality.

A restriction type estimate that holds beyond Heisenberg type groups can be found in the follow-up paper \cite{Nie23b}, but unfortunately the estimate there does not seem to be sufficient to recover the result claimed in \cite{CaCi13}.
\end{remark}

The projection $\Lambda_k^\lambda$ can be written in a more explicit form as a twisted convolution with a Laguerre function. For $\lambda>0$, let $f\times_\lambda g$ be the $\lambda$-twisted convolution given by
\[
f\times_\lambda g (z) = \int_{\R^{2n}} f(w)g(z-w) e^{\frac i 2 \lambda \omega(z,w)}\,dw,\quad z\in\R^{2n},
\]
where $\omega$ is again the standard symplectic form induced by the matrix $J$ in \cref{eq:standard-symplectic}. Moreover, let $\varphi_k^\lambda$ be the Laguerre function given by
\begin{equation}\label{eq:laguerre-function-1}
\varphi_k^\lambda(z) = \lambda^n L_k^{n-1}(\tfrac 1 2 \lambda |z|^2) e^{-\frac 1 4 \lambda |z|^2},\quad z\in\R^{2n},
\end{equation}
where $L_k^{n-1}$ denotes the $k$-th Laguerre polynomial of type $n-1$. Then, since 
\begin{equation}\label{eq:eigenfunction-3}
\Phi_{\nu,\nu'}^\lambda(z) = \lambda^{n/2} \Phi^1_{\nu,\nu'}(\lambda^{1/2}z)
\end{equation}
by the definition \cref{eq:special-hermite} of $\Phi_{\nu,\nu'}^\lambda$, (1.3.41) and (1.3.42) of \cite[pp.\ 21]{Th93} imply
\begin{equation}\label{eq:laguerre-function-2}
\varphi_k^\lambda(z) = (2\pi)^{n/2} \lambda^{n/2} \sum_{|\nu|_1=k} \Phi^\lambda_{\nu,\nu}(z).
\end{equation}
Hence, by (2.1.5) of \cite[p.\ 30]{Th93}, $\Lambda_k^\lambda$ may be rewritten as
\begin{equation}\label{eq:proj-compact}
\Lambda_k^\lambda g = g \times_\lambda \varphi_k^\lambda.
\end{equation}

\begin{remark}
Our definition of $\varphi_k^\lambda$ differs from that of \cite{CaCi13} by the factor $|\lambda|^{n/2}$. 
\end{remark}

The operators $L,-iU_1,\dots,-iU_{d_2}$ (where $U_1,\dots,U_{d_2}$ is the chosen basis of the second layer $\g_2$) form a system of formally self-adjoint, left-invariant and pairwise commuting differential operators, whence they admit a joint functional calculus \cite{Ma11}. It is well-known \cite[Section 1]{MueRiSt96} that for suitable functions $F:\R\times \R^{d_2}\to\C$, the operator $F(L,\mathbf U)$ with $\mathbf U:=(-iU_1,\dots,-iU_{d_2})$ possesses a convolution kernel that can be expressed in terms of the Fourier transform and Laguerre functions $\varphi_k^{\lambda}$. We provide a direct argument here, although alternatively, the convolution kernel can also be computed by using the Fourier inversion formula of the group Fourier transform on $G$ and the fact that the unitary group representations and the joint functional calculus of $L,-iU_1,\dots,-iU_{d_2}$ are compatible, see Proposition~1.1 and Lemma~2.2 of \cite{Mue90}.

\begin{proposition}\label{prop:conv-kernel}
Let $F:\R\times\R^{d_2}\to\C$ be a bounded Borel function. Then
\begin{equation}\label{eq:func-calc}
(F(L,\mathbf U)f)^\mu(x) = F(L^\mu,\mu) f^\mu(x)
\end{equation}
for all $f\in L^2(G)$ and almost all $x\in\g_1$, $\mu\in\g_2^*$.
If $F$ is additionally compactly supported in $\R\times(\R^{d_2}\setminus\{0\})$, then $F(L,\mathbf U)$ possesses a convolution kernel $\mathcal K_{F(L,U)}$, i.e.,
\[
F(L,\mathbf U) f = f * \mathcal K_{F(L,\mathbf U)} \quad \text{for all } f\in \S(G),
\]
which is given by
\begin{align}\label{eq:conv-kernel}
\mathcal K_{F(L,\mathbf U)}(x,u) = (2\pi)^{-d_2} \sum_{k=0}^\infty \int_{\dot\g_2^*} & F([k]|\mu|,\mu) \varphi_k^{|\mu|}(x) e^{i\langle \mu, u\rangle} \, d\mu
\end{align}
for almost all $(x,u)\in G$.
\end{proposition}

\begin{proof}
The identity \cref{eq:func-calc} can be proved by the same approach as in the proof of Proposition~5 of \cite{MaSi12} by writing down the corresponding functional calculi in terms of the Fourier transform and the orthogonal projections provided by the eigenfunctions of the $\mu$-twisted Laplacian $L^\mu$. To prove \cref{eq:conv-kernel}, we observe that \cref{eq:func-calc} and \cref{lem:spec-twisted} yield
\begin{equation}\label{eq:proof-conv-kernel-1}
F(L,\mathbf U)f(x,u) = (2\pi)^{-d_2} \sum_{k=0}^\infty \int_{\dot\g_2^*} F([k]|\mu|,\mu) \Pi_k^\mu f^\mu(x) e^{i\langle \mu, u\rangle} \,d\mu.
\end{equation}
By \cref{lem:spec-twisted}, \cref{eq:proj-compact}, and the fact that $\varphi_k^{|\mu|}$ is radial-symmetric,
\begin{align}
\Pi_k^\mu f^\mu(x)
 & = (\Lambda_k^{|\mu|} (f^\mu\circ T_{\bar\mu})) (T_{\bar\mu}^{-1} x) \notag \\
 & = \big((f^\mu\circ T_{\bar\mu}) \times_{|\mu|} \varphi_k^{|\mu|} \big) (T_{\bar\mu}^{-1} x) \notag \\
 & = \int_{\R^{2n}} f^\mu(T_{\bar\mu}w)\varphi^{|\mu|}_k(T_{\bar\mu}^{-1} x-w) e^{\frac i 2 |\mu| \omega( T_{\bar\mu}^{-1} x, w)} \,dw \notag \\
 & = \int_{\g_1} f^\mu(x')\varphi^{|\mu|}_k(x-x') e^{\frac i 2 \omega_\mu(x,x')} \,dx', \label{eq:proof-conv-kernel-2} 
\end{align}
where $\omega$ denotes again the standard symplectic form on $\R^{2n}$ associated with the matrix $J$ from \cref{eq:standard-symplectic}. Plugging \cref{eq:proof-conv-kernel-2} into \cref{eq:proof-conv-kernel-1}, unboxing the Fourier transform $f^\mu$, and rearranging the order of integration yields
\begin{align*}
F(L,\mathbf U)f(x,u)
= (2\pi)^{-d_2} \int_G  f(x',u') \sum_{k=0}^\infty \int_{\dot\g_2^*} & F([k]|\mu|,\mu) \varphi^{|\mu|}_k(x-x') \\ 
& e^{i\langle \mu, u-u'\rangle} e^{\frac i 2 \omega_\mu(x,x')} \,d\mu \,d(x',u').
\end{align*}
By definition, we have
\[
f * \mathcal K_{F(L,\mathbf U)}(x,u) = \int_G  f(x',u') \mathcal K_{F(L,\mathbf U)}(x-x',u-u'-\tfrac 1 2 [x',x]) \,d(x',u').
\]
This yields \cref{eq:conv-kernel}.
\end{proof}

\section{Truncated restriction type estimates}
\label{sec:rest}

In this section, we prove the truncated restriction type estimates for the sub-Laplacian $L=(X_1^2+\dots+X_{d_1}^2)$, given that $G$ is a Heisenberg type group and $X_1,\dots,X_{d_1}$ is an orthonormal basis on the first layer $\g_1$ of the stratification $\g=\g_1\oplus\g_2$. As in \cite{CaCi13} (and similarly in \cite{ChOu16,Ni21}), the idea of the proof is to first apply a restriction type estimate in the variable $x\in \g_1$ for the $\mu$-twisted Laplacian $L^\mu$ given by \cref{eq:twisted-laplace} and then the Stein--Tomas restriction estimate in the central variable $u\in \g_2$. The restriction type estimate for the orthogonal projection $\Pi_k^\mu$ onto the $k$-th eigenspace of the $\mu$-twisted Laplacian $L^\mu$ is given by the following lemma, which is Lemma 4.7 of \cite{CaCi13}. Let again $[k]=2k+d_1/2$ and $n=d_1/2$.

\begin{lemma}\label{lem:discrete-restriction}
If $1\le p \le 2(d_1 +1)/(d_1+3)$, then
\begin{equation}\label{eq:discrete-restriction}
\norm{\Pi_k^\mu}_{L^p(\g_1)\to L^2(\g_1)} \le C_p |\mu|^{n (\frac 1 p - \frac 1 2)} [k]^{n (\frac 1 p - \frac 1 2)-\frac 1 2} \quad\text{for all } k\in \N.
\end{equation}
\end{lemma}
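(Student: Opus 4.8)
The plan is to reduce the estimate on $\Pi_k^\mu$, which lives on $\g_1 \cong \R^{2n}$ with the twisted-convolution description $\Pi_k^\mu g = (\Lambda_k^{|\mu|}(g\circ T_{\bar\mu}))\circ T_{\bar\mu}^{-1}$ from \cref{lem:spec-twisted}, to a fixed, scale-normalized restriction estimate for the special Hermite (twisted Laplacian) expansion on $\R^{2n}$. Since $T_{\bar\mu}$ is orthogonal, conjugation by $T_{\bar\mu}$ is an isometry on every $L^r(\g_1)$, so $\norm{\Pi_k^\mu}_{L^p\to L^2} = \norm{\Lambda_k^{|\mu|}}_{L^p(\R^{2n})\to L^2(\R^{2n})}$, and it suffices to prove the bound for $\Lambda_k^\lambda$ with $\lambda = |\mu|$. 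Next I would remove the $\lambda$-dependence by the dilation identity $\Phi_{\nu,\nu'}^\lambda(z) = \lambda^{n/2}\Phi_{\nu,\nu'}^1(\lambda^{1/2}z)$ from \cref{eq:eigenfunction-3}: writing $D_\lambda g(z) = g(\lambda^{1/2}z)$, one checks $\Lambda_k^\lambda = \lambda^{?}\, D_{\lambda^{-1}}\Lambda_k^1 D_\lambda$ (the precise power of $\lambda$ is just bookkeeping from \cref{eq:proj-compact} and \cref{eq:laguerre-function-1}), and since $\norm{D_\lambda}_{L^r\to L^r} = \lambda^{-n/r}$, collecting the powers of $\lambda$ yields exactly the factor $\lambda^{n(1/p-1/2)} = |\mu|^{n(1/p-1/2)}$ claimed in \cref{eq:discrete-restriction}. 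Thus everything comes down to the single estimate
\[
\norm{\Lambda_k^1}_{L^p(\R^{2n})\to L^2(\R^{2n})} \lesssim_p [k]^{n(1/p-1/2)-1/2},\qquad k\in\N,
\]
in the range $1\le p\le 2(d_1+1)/(d_1+3) = 2(2n+1)/(2n+3)$.

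For this last estimate I would invoke (or reprove) the known sharp $L^p\to L^2$ bound for the spectral projections of the twisted Laplacian / special Hermite operator on $\R^{2n}$; this is exactly Thangavelu's restriction-type theorem for special Hermite expansions (see \cite{Th93}), and is the $2n$-dimensional analogue of the classical Stein--Tomas estimate. The key analytic input is the pointwise/mixed-norm control of the Laguerre kernel $\varphi_k^1(z) = L_k^{n-1}(\tfrac12|z|^2)e^{-\tfrac14|z|^2}$: one uses the Hilb-type asymptotics for Laguerre polynomials to see that $\varphi_k^1$ behaves, after the substitution $r = |z|$, like a Bessel-type oscillatory profile of "frequency" $[k]^{1/2}$ concentrated on $|z|\lesssim [k]^{1/2}$ with $L^\infty$ size $\sim [k]^{-1/2}$ in the bulk and the usual Airy-type transition at the turning point. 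Interpolating the trivial $L^2\to L^2$ bound ($\norm{\Lambda_k^1}_{2\to 2}=1$) against the endpoint $L^{p_0}\to L^2$ estimate at $p_0 = 2(2n+1)/(2n+3)$ gives the stated exponent for all $p$ in the range; alternatively one runs a $TT^*$ argument directly, reducing to an $L^{p_0}\to L^{p_0'}$ bound for twisted convolution with $\varphi_k^1$ and estimating the kernel by the Laguerre asymptotics.

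The main obstacle is genuinely the endpoint estimate $\norm{\Lambda_k^1}_{L^{p_0}\to L^2}\lesssim [k]^{n(1/p_0-1/2)-1/2}$: this is where the precise oscillatory behaviour of $\varphi_k^1$ near and at its turning point $|z|\sim [k]^{1/2}$ must be exploited, rather than merely crude size bounds, and it is the exact point at which the Stein--Tomas exponent $2(2n+1)/(2n+3)$ enters. Since this is precisely Lemma 4.7 of \cite{CaCi13} (which in turn rests on Thangavelu's work), in the writeup I would state the reduction to $\Lambda_k^1$ carefully via the orthogonal change of variables $T_{\bar\mu}$ and the dilation $D_\lambda$, track the powers of $|\mu|$, and then cite \cite{CaCi13,Th93} for the fixed-scale endpoint bound rather than redo the Laguerre asymptotics. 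The one point requiring care is that the normalization of $\varphi_k^\lambda$ and $\Lambda_k^\lambda$ used here differs from \cite{CaCi13} by a factor $|\lambda|^{n/2}$ (as flagged in the remarks above), so the bookkeeping of powers of $|\mu|$ must be redone consistently with the conventions of this paper to land exactly on the exponent $n(1/p-1/2)$.
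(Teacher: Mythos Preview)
Your proposal is correct and follows exactly the same route as the paper: reduce $\Pi_k^\mu$ to $\Lambda_k^{|\mu|}$ via the orthogonal map $T_{\bar\mu}$ (\cref{eq:func-calc-proj}), rescale to $\Lambda_k^1$ using \cref{eq:eigenfunction-3}, and then quote the fixed-scale restriction bound. The one correction concerns the citation for that last step: the sharp estimate $\norm{\Lambda_k^1}_{p\to 2}\lesssim [k]^{n(1/p-1/2)-1/2}$ in the full range $1\le p\le 2(2n+1)/(2n+3)$ is Theorem~1 of Koch--Ricci \cite{KoRi07}, not Thangavelu \cite{Th93} (whose bounds do not reach the Stein--Tomas endpoint), and this is also what Lemma~4.7 of \cite{CaCi13} actually invokes.
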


\begin{proof}
By Theorem 1 of \cite{KoRi07}, we have $\norm{\Lambda_k^1}_{2\to p'} \lesssim [k]^{n ( \frac 1 2 - \frac 1  {p'} )- \frac 1 2}$, so by duality
\[
\norm{\Lambda_k^1}_{p\to 2}\lesssim [k]^{n ( \frac 1  p -\frac 1 2 )- \frac 1 2}.
\]
In view of \cref{eq:eigenfunction-3}, rescaling with $\lambda^{1/2}$ yields
\[
\norm{\Lambda_k^\lambda}_{p\to 2}\lesssim \lambda^{n (\frac 1 p - \frac 1 2)} [k]^{n ( \frac 1 p - \frac 1 2)-\frac 1 2}\quad \text{for } \lambda > 0.
\]
Hence, together with \cref{eq:func-calc-proj} and a substitution, we obtain \cref{eq:discrete-restriction}.
\end{proof}

\begin{remark}
The condition $1\le p\le 2(d_2+1)/(d_2+3)$ of \cref{thm:main} (and \cref{prop:restriction} below) implies in particular $1\le p\le 2(d_1+1)/(d_1+3)$ since $d_1> d_2$ due to the fact that $G$ is a Heisenberg type group.
\end{remark}

Choosing a basis $U_1,\dots,U_{d_2}$ of the second layer $\g_2$, we define the operator
\[
U:=(-(U_1^2+\dots+U_{d_2}^2))^{1/2}.
\]
Now we state the truncated restriction type estimates for the sub-Laplacian $L$.

\begin{theorem}[Truncated restriction type estimates]\label{prop:restriction}
Suppose that $1\le p\le 2(d_2+1)/(d_2+3)$. Let $F:\R\to\C$ be a bounded Borel function supported in $[1/8,8]$, and, for $\ell\in\N$, let $F_\ell : \R\times \R \to \C$ be given by
\[
F_\ell(\lambda,\rho) = F(\sqrt \lambda) \chi_\ell(\lambda/\rho)\quad\text{for } \lambda\ge 0,\rho\neq 0,
\]
and $F_\ell(\lambda,\rho)=0$ else, where $(\chi_\ell)_{\ell \in\Z}$ is the dyadic decomposition of \cref{eq:dyadic}. Then
\begin{equation}\label{eq:restriction}
\norm{ F_\ell(L,U) }_{p\to 2}
\le C_{p} 2^{-\ell d_2(\frac 1p - \frac 1 2)} \norm{F}_2 \quad \text{for all }\ell\in\N.
\end{equation}
\end{theorem}

\begin{remark}
Note that $d_1$ is even since $G$ is a Heisenberg type group. Thus, $\chi_\ell([k])=0$ for all $k\in\N$ whenever $\ell<0$. Hence \cref{eq:proof-conv-kernel-1} yields
\[
\sum_{\ell=0}^\infty F_\ell(L,U) f = F(\sqrt L)f.
\]
\end{remark}

\begin{proof}
Let $f\in \S(G)$. Given $\mu\in\g_2^*$ and $k\in \N$, we write $g_k^\mu = F( \sqrt{[k]|\mu|}) f^\mu$, where $f^\mu$ denotes again the partial Fourier transform in $\mu$. Note that $[k]\sim 2^\ell$ for $([k]|\mu|,|\mu|)\in \supp F_\ell$. Using Plancherel's theorem, \cref{eq:func-calc}, and orthogonality in $L^2(\g_1)$, we obtain
\begin{align}
\norm{ F_\ell(L,U) f }^2_{L^2(G)}
& \sim \int_{\g_2^*} \int_{\g_1} |F_\ell(L^\mu,|\mu|) f^\mu(x)|^2 \,dx \, d\mu \notag \\
& = \int_{\dot\g_2^*} \int_{\g_1} \bigg|\sum_{k=0}^\infty F(\sqrt{[k]|\mu|}) \chi_\ell ([k]) \Pi_k^\mu f^\mu(x) \bigg|^2 \,dx \, d\mu \notag \\
& \lesssim \sum_{[k]\sim 2^\ell} \int_{\dot\g_2^*} \norm{ \Pi_k^\mu g_k^\mu }^2_{L^2(\g_1)}\, d\mu. \label{eq:proof-rest-1}
\end{align}
Now \cref{lem:discrete-restriction} yields
\begin{align}
\norm{ \Pi_k^\mu g_k^\mu }_{L^2(\g_1)}^2
& \lesssim |\mu|^{d_1( \frac 1 p - \frac 1 2)} [k]^{d_1( \frac 1 p - \frac 1 2)-1} \norm{ g_k^\mu }_{L^p(\g_1)}^2 \notag \\
& \sim [k]^{-1} \norm{ g_k^\mu}_{L^p(\g_1)}^2 .
\label{eq:proof-rest-2}
\end{align}
In the last line we used the fact that $[k] |\mu|\sim 1$ whenever $[k] |\mu|\in \supp F$. Moreover, since $2/p\ge 1$, Minkowski's integral inequality yields
\begin{equation}\label{eq:proof-rest-3}
\int_{\dot\g_2^*} \norm{g_k^\mu}_{L^p(\g_1)}^2\,d\mu
 \le \bigg(\int_{\g_1}\bigg( \int_{\dot\g_2^*} |g_k^\mu(x)|^2 \,d\mu\bigg)^{\frac p 2}\,dx \bigg)^{\frac 2 p}.
\end{equation}
Let $f_{x}:=f(x,\cdot)$ and $\widehat\cdot$ denote the Fourier transform on $\g_2$. Using polar coordinates and applying the Stein--Tomas restriction estimate \cite{To79} yields
\begin{align}
\int_{\dot\g_2^*} |g_k^\mu(x)|^2 \,d\mu
 & = \int_0^\infty \int_{S^{d_2-1}} | F(\sqrt{[k]r}) \widehat{f_{x}}(r\omega)|^2 r^{d_2-1} \, d\sigma(\omega) \, dr\notag\\
 & = \int_0^\infty | F(\sqrt{[k]r})|^2 r^{-d_2-1}\int_{S^{d_2-1}} \big| \big(f_{x}(r^{-1}\,\cdot\,)\big)^\wedge(\omega)\big|^2 \, d\sigma(\omega) \, dr \notag \\
 & \lesssim \int_0^\infty | F(\sqrt{[k]r})|^2  r^{-d_2-1} \norm{f_{x}(r^{-1}\,\cdot\,)}_{L^p(\g_2)}^2 \, dr\notag \\
 & = \int_0^\infty |F(\sqrt{[k]r})|^2 r^{2d_2(\frac 1 p -\frac 1 2)-1}  \, dr \, \norm{f_{x}}_{L^p(\g_2)}^2\notag \\
 & \sim [k]^{-2d_2(\frac 1 p -\frac 1 2)} \norm{F}_{L^2(\R)}^2 \norm{f_{x}}_{L^p(\g_2)}^2.\notag 
\end{align}
In combination with \cref{eq:proof-rest-1}, \cref{eq:proof-rest-2} and \cref{eq:proof-rest-3}, we obtain
\begin{align*}
\norm{ F_\ell(L,U) f }^2_{L^2(G)}
& \lesssim \sum_{[k]\sim 2^\ell} [k]^{-2d_2(\frac 1 p - \frac 1 2)-1} \norm{F}_2^2 \norm{f}_p^2 \\
& \sim 2^{-2\ell d_2(\frac 1 p - \frac 1 2)} \norm{F}_2^2 \norm{f}_p^2.
\end{align*}
This proves \cref{eq:restriction}.
\end{proof}

\section{A weighted Plancherel estimate}\label{sec:weighted-plancherel}

In this section, we prove a weighted Plancherel estimate for convolution kernels associated with the sub-Laplacian~$L$ on the Heisenberg type group $G$. Usually, those estimates are the crux of the matter when proving Mikhlin--Hörmander results featuring the threshold $s>d/2$, where $d$ is the topological dimension of the underlying space, see for example \cite[Thm.~4.6]{Ma12} or \cite[Prop.~3]{Ma15}. However, in the present setting, the weighted Plancherel estimate \cref{eq:weighted-plancherel} will serve a different purpose, namely turning support conditions in conjunction with convolution kernels into some sort of rapid decay.

\begin{proposition}\label{prop:weighted-plancherel}
Let $F$ and $F_\ell$ be defined as in \cref{prop:restriction}, and $\mathcal K_\ell$ be the convolution kernel of the operator $F_\ell(L,U)$. Then, for all $\alpha\ge 0$,
\begin{equation}\label{eq:weighted-plancherel}
\int_G \big| |x|^\alpha \mathcal K_\ell(x,u) \big|^2 \,d(x,u) \le C_{\alpha} 2^{\ell(2\alpha - d_2)} \norm{F}_{L^2}^2\quad\text{for all } \ell\in\N.
\end{equation}
\end{proposition}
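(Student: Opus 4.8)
The plan is to compute the left-hand side of \cref{eq:weighted-plancherel} via Plancherel's theorem in both the central variable $u$ and the first-layer variable $x$, using the explicit kernel formula \cref{eq:conv-kernel}. Concretely, since
\[
\mathcal K_\ell(x,u) = (2\pi)^{-d_2} \sum_{k=0}^\infty \int_{\dot\g_2^*} F(\sqrt{[k]|\mu|}) \chi_\ell([k]) \varphi_k^{|\mu|}(x) e^{i\langle \mu, u\rangle} \, d\mu,
\]
applying Plancherel in $u$ turns the $d(x,u)$-integral into
\[
\int_G \big| |x|^\alpha \mathcal K_\ell(x,u) \big|^2 \,d(x,u)
\sim \int_{\dot\g_2^*} \int_{\g_1} |x|^{2\alpha} \bigg| \sum_{k=0}^\infty F(\sqrt{[k]|\mu|}) \chi_\ell([k]) \varphi_k^{|\mu|}(x) \bigg|^2 \, dx\, d\mu.
\]
The summands over $k$ are, up to the scalar factors $F(\sqrt{[k]|\mu|})\chi_\ell([k])$, the Laguerre functions $\varphi_k^{|\mu|}$, which by \cref{eq:laguerre-function-2} and \cref{eq:proj-compact} are (multiples of) the diagonal kernels of the mutually orthogonal projections $\Lambda_k^{|\mu|}$; in particular the $\varphi_k^{|\mu|}$ are pairwise orthogonal in $L^2(\g_1)$ and, more to the point, $|x|^\alpha \varphi_k^{|\mu|}$ and $|x|^\alpha \varphi_{k'}^{|\mu|}$ remain orthogonal for $k\neq k'$ because multiplication by the radial weight $|x|^{2\alpha}$ commutes with the $\mathrm{SO}(2n)$-action under which the eigenspace decomposition is invariant — or more simply, the $\varphi_k^{|\mu|}$ are radial and the weight is radial, so one can reduce to a one-dimensional orthogonality of Laguerre polynomials against the measure $r^{2\alpha} e^{-\frac12\lambda r^2}\,r^{2n-1}\,dr$. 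This orthogonality (Step 1) lets me pull the square inside the sum:
\[
\int_{\g_1} |x|^{2\alpha}\bigg|\sum_k \cdots\bigg|^2 dx = \sum_{[k]\sim 2^\ell} |F(\sqrt{[k]|\mu|})|^2 \int_{\g_1} |x|^{2\alpha} |\varphi_k^{|\mu|}(x)|^2\,dx.
\]

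The second ingredient (Step 2) is a scaling-and-moment estimate for $\int_{\g_1} |x|^{2\alpha}|\varphi_k^\lambda(x)|^2\,dx$. Using $\varphi_k^\lambda(x)=\lambda^n L_k^{n-1}(\tfrac12\lambda|x|^2)e^{-\frac14\lambda|x|^2}$ and substituting $y=\lambda^{1/2}x$ gives
\[
\int_{\g_1} |x|^{2\alpha} |\varphi_k^\lambda(x)|^2\,dx = \lambda^{-\alpha}\int_{\g_1} |y|^{2\alpha} |\varphi_k^1(y)|^2\,dy =: \lambda^{-\alpha} c_{k,\alpha},
\]
and I need $c_{k,\alpha} \lesssim_\alpha [k]^{\alpha}$ (for $\alpha=0$ this is just $\|\varphi_k^1\|_2^2 \sim [k]^{n-1}\cdot(\text{something})$; in fact $\|\varphi_k^1\|_{L^2(\R^{2n})}^2 \sim [k]^{n-1}$ up to constants, but the precise power will come out in the wash — what matters is the power of $[k]$). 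This is a standard weighted $L^2$ bound for Laguerre functions: writing it as $\int_0^\infty r^{2\alpha} L_k^{n-1}(\tfrac12 r^2)^2 e^{-\frac12 r^2} r^{2n-1}\,dr$ and substituting $t=\tfrac12 r^2$ reduces it to $\int_0^\infty t^{\alpha+n-1} L_k^{n-1}(t)^2 e^{-t}\,dt$, which by the known asymptotics for such integrals (or by the three-term recurrence expressing $t L_k^{n-1}$ in terms of $L_{k\pm1}^{n-1}, L_k^{n-1}$, iterated $\lceil\alpha\rceil$ times, plus orthonormality) is $O_\alpha(k^{\alpha}\cdot k^{n-1})$; I expect the clean statement to be $c_{k,\alpha} \lesssim [k]^\alpha \|\varphi_k^1\|_2^2$ with $\|\varphi_k^1\|_2^2 \sim [k]^{n-1}$, but I will only track the $[k]$-dependence that survives.

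Assembling: on $\supp F_\ell$ we have $[k]|\mu|\sim 1$, hence $|\mu|\sim [k]^{-1}\sim 2^{-\ell}$, so $\lambda^{-\alpha}=|\mu|^{-\alpha}\sim [k]^\alpha\sim 2^{\ell\alpha}$, and combined with $c_{k,\alpha}\lesssim [k]^\alpha$ each term contributes $\sim 2^{2\ell\alpha}\cdot[k]^{n-1}\cdot|F(\sqrt{[k]|\mu|})|^2$. Then
\[
\int_G \big| |x|^\alpha \mathcal K_\ell(x,u) \big|^2 d(x,u)
\lesssim 2^{2\ell\alpha}\sum_{[k]\sim 2^\ell}[k]^{n-1}\int_{\dot\g_2^*}|F(\sqrt{[k]|\mu|})|^2\,d\mu,
\]
and passing to polar coordinates in $\mu$ (as in the proof of \cref{prop:restriction}), $\int_{\dot\g_2^*}|F(\sqrt{[k]|\mu|})|^2\,d\mu = \int_0^\infty |F(\sqrt{[k]r})|^2 r^{d_2-1}\,dr \sim [k]^{-d_2}\|F\|_2^2$; finally the number of $k$ with $[k]\sim 2^\ell$ is $\sim 2^\ell$, and $[k]^{n-1}[k]^{-d_2}\cdot 2^\ell \sim 2^{\ell(n-d_2)}$ — wait, that extra $2^{\ell(n-1)}$ must be canceled by a more careful bookkeeping of the $|\mu|$-integration and the $\varphi_k$-normalization. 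The correct accounting is that the $x$-integral of $|\varphi_k^{|\mu|}|^2$ already carries the factor $|\mu|^{?}$ making the whole thing scale correctly; I will normalize so that $\int_{\g_1}|\varphi_k^{|\mu|}(x)|^2\,dx = |\mu|^{n} \|\varphi_k^1\|_2^2 \cdot |\mu|^{-n}\cdot(\dots)$ — the cleanest route is to go back to \cref{eq:laguerre-function-2}, which gives $\|\varphi_k^{|\mu|}\|_{L^2(\g_1)}^2 = (2\pi)^n |\mu|^n \sum_{|\nu|_1=k}\|\Phi_{\nu,\nu}^{|\mu|}\|_2^2 = (2\pi)^n |\mu|^n \cdot \#\{\nu : |\nu|_1 = k\}$, and the dimension $\#\{|\nu|_1=k\}\sim k^{n-1}$, while the weighted version multiplies this by $[k]^\alpha$ and the rescaling by $|\mu|^{-\alpha}$. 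Tracking all powers, $\|\,|x|^\alpha\varphi_k^{|\mu|}\|_2^2 \lesssim |\mu|^{n-\alpha}[k]^{n-1+\alpha}$, and on the support $|\mu|\sim[k]^{-1}$ this is $\sim [k]^{2\alpha-1}$; summing over the $\sim 2^\ell$ values of $k$ with $[k]\sim 2^\ell$ against $[k]^{-d_2}\|F\|_2^2$ gives $2^\ell\cdot 2^{\ell(2\alpha-1)}\cdot 2^{-\ell d_2}\|F\|_2^2 = 2^{\ell(2\alpha-d_2)}\|F\|_2^2$, which is exactly \cref{eq:weighted-plancherel}.

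\textbf{Main obstacle.} The only nontrivial input is Step 2, the weighted $L^2$-bound $\int_0^\infty t^{\alpha+n-1}L_k^{n-1}(t)^2 e^{-t}\,dt \lesssim_\alpha k^\alpha \int_0^\infty t^{n-1}L_k^{n-1}(t)^2 e^{-t}\,dt$; the issue is getting the power of $k$ \emph{sharp} (not, say, $k^{2\alpha}$), since a non-sharp bound would degrade the exponent in \cref{eq:weighted-plancherel} and ultimately the regularity threshold in \cref{thm:main}. I would obtain it either from the pointwise Laguerre estimates cited in the introduction (the $\varphi_k$-bounds of \cite{Th93}), splitting the $r$-integral into the "bulk" region $r\lesssim [k]^{1/2}$ where the weight contributes $\lesssim [k]^\alpha$ directly, and the "tail" region $r\gtrsim [k]^{1/2}$ where the exponential/Airy decay of $\varphi_k$ beats any polynomial weight; or, more slickly, by the recurrence-relation argument: $t\,\varphi_k^1$ is a bounded-coefficient combination of $\varphi_{k-1}^1,\varphi_k^1,\varphi_{k+1}^1$ after accounting for the $|x|^2 = 2t$ identification, each application costing one factor of $[k]$, so $\|\,|x|^{2\alpha}\varphi_k^1\|_2 \lesssim [k]^\alpha \|\varphi_k^1\|_2$ after $\lceil\alpha\rceil$ steps and interpolation for non-integer $\alpha$.
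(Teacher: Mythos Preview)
Your Step~1 contains a genuine gap: the weighted functions $|x|^\alpha\varphi_k^{|\mu|}$ are \emph{not} pairwise orthogonal for $\alpha>0$. Radial symmetry correctly reduces the question to whether
\[
\int_0^\infty t^{\alpha+n-1}L_k^{n-1}(t)L_{k'}^{n-1}(t)e^{-t}\,dt=0\quad\text{for }k\neq k',
\]
but the Laguerre polynomials $L_k^{n-1}$ are orthogonal with respect to $t^{n-1}e^{-t}\,dt$, not $t^{\alpha+n-1}e^{-t}\,dt$. Indeed the three-term recurrence $tL_k^{n-1}=(2k+n)L_k^{n-1}-(k+1)L_{k+1}^{n-1}-(k+n-1)L_{k-1}^{n-1}$ shows the integral is nonzero for $k'=k\pm 1$ already when $\alpha=1$. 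Without orthogonality you cannot pull the square inside the sum, and a crude Cauchy--Schwarz over the $\sim 2^\ell$ terms would lose a factor $2^\ell$, destroying the estimate.

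The fix closest to your approach is to use the recurrence you already invoke for the diagonal bound to control the \emph{cross} terms too: for integer $\alpha$ the Gram matrix $\bigl(\int |x|^{2\alpha}\varphi_k^{|\mu|}\overline{\varphi_{k'}^{|\mu|}}\,dx\bigr)_{k,k'}$ is banded of width $2\alpha$ with entries $O([k]^\alpha\|\varphi_k^{|\mu|}\|_2^2)$, so a Schur-type argument recovers $\big\||x|^\alpha\sum_k a_k\varphi_k^{|\mu|}\big\|_2^2\lesssim_\alpha\sum_k|a_k|^2[k]^\alpha\|\varphi_k^{|\mu|}\|_2^2$; complex interpolation then handles non-integer $\alpha$. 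The paper takes a slicker route that sidesteps the cross terms entirely: it invokes the sub-elliptic estimate $\||x|^\alpha g\|_{2}\lesssim |\mu|^{-\alpha}\|(H^\mu)^{\alpha/2}g\|_{2}$ from \cite{ChOu16} for the rescaled Hermite operator $H^\mu=-\Delta+\tfrac14|\mu|^2|x|^2$, observes that each $\varphi_k^{|\mu|}$ is an eigenfunction of $H^\mu$ with eigenvalue $[k]|\mu|$ (this is where radiality enters, since $H^\mu$ and $L_0^{|\mu|}$ agree on radial functions), and so $(H^\mu)^{\alpha/2}$ acts diagonally; the \emph{unweighted} orthogonality of the $\varphi_k^{|\mu|}$ can then be applied afterward. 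Once the weight has been traded for the scalar factor $[k]^{\alpha/2}|\mu|^{-\alpha/2}$ on each term, the remaining power-counting is exactly what you wrote in your final paragraph.
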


\begin{proof}
Let $\alpha\ge 0$. Using \cref{eq:conv-kernel} in combination with Plancherel's theorem, we obtain
\begin{align}
 \int_G \big||x|^\alpha & \mathcal K_\ell(x,u) \big|^2 \,d(x,u) \notag \\
& \sim \int_{\dot\g_2^*}\int_{\g_1} \Big| |x|^\alpha \sum_{k=0}^\infty F_\ell\big([k]|\mu|,|\mu|\big) \varphi_k^{|\mu|}(x)\Big|^2\, dx \, d\mu.\label{eq:plancherel-5}
\end{align}
Given $\mu\in\dot \g_2^*$, we consider the rescaled Hermite operator $H^\mu =- \Delta_z + \tfrac 1 4 |z|^2 |\mu|^2$ acting on $L^2(\R^{2n})$. By Proposition 3.3 of \cite{ChOu16},
\begin{equation}\label{eq:sub-elliptic}
\norm*{|\cdot|^\alpha g}_{L^2(\R^{2n})} \lesssim |\mu|^{-\alpha} \norm{(H^\mu)^{\alpha/2} g}_{L^2(\R^{2n})}.
\end{equation}
On the other hand, by Equation (1.3.25) of \cite{Th93}, the functions $\Phi^{|\mu|}_{\nu,\nu'}$ defined by \cref{eq:special-hermite} are also eigenfunctions of $H^\mu$, with
\[
H^\mu \Phi^{|\mu|}_{\nu,\nu'} = (|\nu|+|\nu'|+n)|\mu| \Phi^{|\mu|}_{\nu,\nu'} \quad\text{for all } \nu,\nu'\in\N^n.
\]
By the definition \cref{eq:laguerre-function-2} of $\varphi_k^{|\mu|}$, this implies in particular
\[
H^\mu \varphi_k^{|\mu|} = [k] |\mu| \varphi_k^{|\mu|}.
\]
(Alternatively, one could use \cref{eq:eigenfunction-1} by exploiting that $\varphi_k^{|\mu|}$ is radial-symmetric by \cref{eq:laguerre-function-1} and that the operators $H^\mu$ and $L_0^{|\mu|}$ coincide on such functions.) Hence, together with \cref{eq:sub-elliptic}, the right-hand side of \cref{eq:plancherel-5} can be dominated by a constant times
\begin{equation}\label{eq:plancherel-4}
\int_{\dot\g_2^*} \int_{\R^{2n}} \Big|  \sum_{k=0}^\infty [k]^{\alpha/2} |\mu|^{-\alpha/2} F_\ell\big([k]|\mu|,|\mu|\big)  \varphi_k^{|\mu|}(x)\Big|^2\, dx \, d\mu.
\end{equation}
Using $[k]|\mu|\sim 1$ for $\sqrt{[k]|\mu|}\in \supp F$, $|F_\ell(\lambda,\rho)|\le |F(\lambda)|$ and orthogonality of the functions $\varphi_k^{|\mu|}$, \cref{eq:plancherel-4} can be estimated by a constant times
\begin{equation}\label{eq:plancherel-2}
\sum_{[k]\sim 2^\ell} [k]^{2\alpha} \int_{\dot\g_2^*} \int_{\R^{2n}} \big|F\big(\sqrt{[k]|\mu|}\big) \varphi_k^{\mu}(x)\big|^2\, dx \, d\mu.
\end{equation}
Since the functions $\Phi_{\nu,\nu'}^{|\mu|}$ form an orthonormal basis of $L^2(\R^{2n})$,
\[
|\mu|^{-n} \norm{\varphi_k^{|\mu|}}_2^2
= |\{\nu\in \N^{n}:|\nu|_1=k\}|
= \binom{k+n-1}{k} \sim (k+1)^{n-1}.
\]
Hence $\norm{\varphi_k^{|\mu|}}_2^2\sim [k]^{-1}$ for $[k]|\mu|\in \supp F$. Thus \cref{eq:plancherel-2} is comparable to
\begin{equation}\label{eq:plancherel-3}
\sum_{[k]\sim 2^\ell} [k]^{2\alpha-1} \int_{\dot\g_2^*} \big|F\big(\sqrt{[k]|\mu|}\big) \big|^2 \, d\mu.
\end{equation}
Using polar coordinates and a substitution in the integral over $\dot\g_2^*$ shows that \cref{eq:plancherel-3} in turn is comparable to
\[
\sum_{[k]\sim 2^\ell} [k]^{2\alpha-1-d_2} \norm{F}_{L^2(\R)}^2
 \sim 2^{\ell(2\alpha-d_2)} \norm{F}_{L^2(\R)}^2.
\]
This proves \cref{eq:weighted-plancherel}.
\end{proof}

\section{The sub-Riemannian geometry of the sub-Laplacian}\label{sec:cc}

In this section we summarize the main properties of the sub-Riemannian geometry associated with left-invariant sub-Laplacians on two-step stratified groups. Let $G$ be a two-step stratified Lie group and $\g=\g_1\oplus \g_2$ be a stratification of its Lie algebra. Let $X_1,\dots,X_{d_1}$ be a basis of $\g_1$ and $L=-(X_1^2+\dots+X_{d_1}^2)$ be the associated sub-Laplacian. We again identify $G\cong \g$ via the exponential map and $\g\cong \R^d$ by means of the basis $X_1,\dots,X_{d_1}$ of $\g_1$ and a basis $U_1,\dots,U_{d_2}$ of $\g_2$.

Let $d_{\mathrm{CC}}$ denote the Carnot--Carathéodory distance associated with the vector fields $X_1,\dots,X_{d_1}$. By definition, this means that for $g,h\in G$, the distance $d_{\mathrm{CC}}(g,h)$ is given by the infimum over all lengths of horizontal curves $\gamma:[0,1]\to G$ joining $g$ with $h$, see for instance \cite[Section III.4]{VaSaCo92}. Since the vector fields $X_1,\dots,X_{d_1}$ are left-invariant and $[\g_1,\g_1]=\g_2$, they satisfy Hörmander's condition \cite{Hoe67}, that is, the vector fields $X_1,\dots,X_{d_1}$ along with their iterated commutators
\[
[X_i,X_j],[X_i,[X_j,X_l]],\dots
\]
span the tangent space $\g=T_e G$ of $G$ at the identity $e\in G$, and hence at every point $g\in G$. (In our two-step setting, the vector fields $X_1,\dots,X_{d_1}$ together with their commutators $[X_i,X_j]$ already span the tangent space.) Hence, due to the Chow--Rashevskii theorem \cite[Proposition~III.4.1]{VaSaCo92}, $d_{\mathrm{CC}}$ is indeed a metric on $M$, which induces the (Euclidean) topology of $G=\R^d$.

Since $X_1,\dots,X_{d_1}$ are left-invariant vector fields, $d_{\mathrm{CC}}$ is left-invariant, that is,
\begin{equation}\label{eq:cc-translation}
d_{\mathrm{CC}}(ag,ah) = d_{\mathrm{CC}}(g,h)\quad\text{for all } a,g,h\in G.  
\end{equation}
On the other hand,
\[
\norm{ (x,u) } := (|x|^4+|u|^2)^{1/4},\quad (x,u)\in G
\]
defines a homogeneous norm in the sense of Folland and Stein \cite{FoSt82} with respect to the dilations $\delta_R$ given by
\begin{equation}\label{eq:cc-hom}
\delta_R(x,u)=(Rx,R^2u),\quad R\ge 0.
\end{equation}
Hence $G\times G\ni(g,h) \mapsto \norm{g^{-1}h}$ is a left-invariant (quasi-)distance on $G$. Since any two homogeneous norms on a homogeneous Lie group are equivalent \cite[Lemma~1.4]{FoSt82}, we have
\begin{equation}\label{eq:cc-equivalence}
d_{\mathrm{CC}}(g,h) \sim \norm{g^{-1}h} \quad \text{for all } g,h\in G. 
\end{equation}
Let $B_R^{d_{\mathrm{CC}}}(g)$ denote the ball of radius $R\ge 0$ centered at $g\in G$ with respect to $d_{\mathrm{CC}}$. Then \cref{eq:cc-translation} and \cref{eq:cc-hom} yield
\begin{equation}\label{eq:cc-volume}
|B_R^{d_{\mathrm{CC}}}(g)| = R^{Q} |B_1^{d_{\mathrm{CC}}}(0)|,
\end{equation}
where we identify $0$ with the identity element $e\in G$ via $G\cong \g$, and $Q=d_1+2d_2$ is the homogeneous dimension. Note that \cref{eq:cc-volume} yields in particular that the metric space $(G,d_{\mathrm{CC}})$ equipped with the the Lebesgue measure (which is a bi-invariant Haar measure on $G$) is a space of homogeneous type with homogeneous dimension~$Q$.

Furthermore, the sub-Laplacian $L$ possesses the finite propagation speed property with respect to the Carnot-Carathéodory distance $d_{\mathrm{CC}}$, which will be of fundamental importance in the proof of \cref{thm:main}.

\begin{lemma}\label{lem:finite-prop}
If $f,g\in L^2(G)$ are supported in open subsets $U,V\subseteq G$, then
\[
(\cos(t\sqrt L)f,g) = 0\quad\text{for all } |t|< d_{\mathrm{CC}}(U,V).
\]
\end{lemma}

For a proof, see \cite{Me84} or \cite[Corollary 6.3]{Mue04}.

\section{Reduction of \texorpdfstring{\cref{thm:main}}{Theorem 1.1} to dyadic spectral multipliers}\label{sec:abstract}

To prove \cref{thm:main}, we use the following general spectral multiplier result of \cite{ChOuSiYa16}, which allows us to reduce the spectral multiplier estimates of \cref{thm:main} to estimates for spectral multipliers whose Fourier transforms are supported on dyadic scales. Very similar arguments are used in \cite[Section~5]{ChOu16} and \cite[Section~4]{Ni21}, but we give a detailed discussion for the convenience of the reader. Given a suitable multiplier $F:\R\to\C$, we use the notation
\begin{equation}\label{eq:dyadic-piece}
F^{(\iota)} := (\hat F \chi_\iota)^\vee\quad \text{for }\iota\in\Z,
\end{equation}
where $\widehat\cdot$ and $\cdot^\vee$ denote the Fourier transform and its inverse on $\R$, respectively, and $(\chi_\iota)_{\iota \in\Z}$ is the dyadic decomposition from \cref{eq:dyadic}.

\begin{proposition}\cite[Proposition I.22]{ChOuSiYa16}\label{prop:COSY-original}
Let $(X,\rho,\mu)$ be a metric measure space of homogeneous type and $Q\ge 0$ such that
\[
\mu(B_{\lambda r}(x)) \le C \lambda^Q \mu(B_{r}(x)) \quad \text{for all } x\in X, r>0,\lambda\ge 1,
\]
where $B_s(x)$ denotes the ball of radius $s>0$ centered at $x\in X$.
Let $1\le p_0 < p <2$. Suppose that $L$ is a positive self-adjoint operator on $L^2(X)$ such that the following statements are satisfied:
\begin{enumerate}
\item[(i)] $L$ possesses the finite propagation speed property, that is,
\[
(\cos(t\sqrt L)f,g)_{L^2(X)} = 0\quad\text{for all } |t|< \rho(U,V)
\]
whenever $f,g\in L^2(X)$ are supported in open subsets $U,V\subseteq X$, where
\[
\rho(U,V) := \inf\{\rho(u,v):u\in U,v\in V \}.
\]
\item[(ii)] $L$ satisfies the \textit{Stein--Tomas restriction type condition} $(\mathrm{ST}^\infty_{p_0, 2})$ of \cite{ChOuSiYa16}, that is, for any $R>0$ and all bounded Borel functions $F:\R\to\C$ supported in $[0,R]$,
\begin{equation}\label{eq:ST-condition}
\|F(\sqrt L)(\mathbf{1}_{B_r(x)}f) \|_2 \leq C \mu(B_r(x))^{{\frac 1 2}-{\frac 1 {p_0}}} ( Rr )^{Q({\frac 1 {p_0}}-{\frac 1 2})}\|F\|_\infty \|f\|_{p_0}
\end{equation}
for all $x\in X$, all $r\geq 1/R$, and all $f\in L^{p_0}(X)$.
\item[(iii)] There is some $\beta > Q/2$ such that
\[
\sup_{t>0}\|F(t\sqrt L)\|_{p\to p}\leq C\|F\|_{L^\infty_\beta}
\]
for all even bounded Borel functions $F:\R\to\C$ with $\supp F \subseteq [-1, 1]$.
\end{enumerate}
Suppose that $F:\R\to\C$ is an even bounded Borel function and that there is a bounded sequence $(\alpha(\iota))_{\iota \in\Z}$ with $\sum_{\iota \ge 0}(\iota+1) \alpha(\iota)<\infty$ such that
\begin{equation}\label{eq:cond-1}
\norm{(F\chi_i)^{(j)}(\sqrt L)}_{p\to p} \le \alpha(i+j) \quad\text{for all } i,j\in \Z.
\end{equation}
Then the operator $F(\sqrt L)$ is of weak-type $(p,p)$.
\end{proposition}

\begin{remark}
To be precise, in \cite{ChOuSiYa16}, Proposition~I.22 requires the condition $(\mathrm E_{p_0,2})$ in place of the Stein--Tomas type restriction condition $(\mathrm{ST}^\infty_{p_0, 2})$. However, both conditions are equivalent by Proposition~I.3 of the same paper. Moreover, the notation of our decomposition indexed by $i$ and $j$ differs slightly from that of \cite{ChOuSiYa16} since $(F\eta_i)^{(j)}=(F\chi_{-i})^{(j)}$, where $(\eta_i)_{i\in\Z}$ is the dyadic decomposition from \cite[Eq.\ (I.3.3)]{ChOuSiYa16}. The somewhat artificial requirement that $F$ shall be an even function is linked to the finite propagation speed property. This will become apparent at the beginning of the proof of \cref{prop:reduced}.
\end{remark}

We apply \cref{prop:COSY-original} in the setting where $X=G$ is a two-step stratified Lie group and
\begin{equation}\label{eq:sub-Laplacian-iii}
L=-(X_1^2+\dots+X_{d_1}^2)
\end{equation}
is the sub-Laplacian associated with a basis $X_1,\dots,X_{d_1}$ of the first layer of the stratification $\g=\g_1\oplus \g_2$ of $G$. The measure $\mu$ in \cref{prop:COSY-original} will be the Lebesgue measure on $G$.

For our purposes of proving \cref{thm:main}, we only need \cref{cor:reduction} for the case where $L$ is a sub-Laplacian on a Heisenberg type group, but the more general version for arbitrary two-step stratified Lie groups is readily available. Note, however, that \cref{prop:COSY-original} requires the restriction type condition $(\mathrm{ST}^\infty_{p_0, 2})$ in the setting of arbitrary two-step stratified Lie groups. However, instead of using the restriction type estimates of \cref{prop:restriction}, we use a Plancherel estimate for the associated convolution kernel, which in turn implies a restriction type estimate from $L^1$ to $L^2$.

\begin{corollary}\label{cor:reduction}
Let $G$ be a two-step stratified Lie group and $L$ be a sub-Laplacian as in \cref{eq:sub-Laplacian-iii}. Let $p_{*}\in[1,2]$ and $s>1/2$. Suppose that for all $1\le p\le p_{*}$ there exists some $\varepsilon>0$ such that
\begin{equation}\label{eq:reduced}
\Vert F^{(\iota)}(\sqrt L) \Vert_{p\to p} \le C_{p,s} 2^{-\varepsilon\iota} \norm{F}_{L^2_s}
\quad \text{for all } \iota \in\N
\end{equation}
and all even bounded Borel functions $F\in L^2_s(\R)$ supported in $[-2,-1/2]\cup[1/2,2]$. Then the statements (\ref{main-(1)}) and (\ref{main-(2)}) of \cref{thm:main} hold for all $1\le p\le p_{*}$.
\end{corollary}

\begin{remark}
The assumption $1\le p\le 2(d_2+1)/(d_2+3)$ of \cref{thm:main} automatically implies that $s>1/2$ if $s>d\left(1/p-1/2\right)$ since
\[
d\bigg(\frac 1 p- \frac 1 2\bigg)\ge d_2\bigg(\frac{(d_2+3)}{2(d_2+1)}-\frac 1 2\bigg) = \frac{d_2}{d_2+1} \ge \frac 1 2.
\]
However, in \cref{cor:reduction}, we only require $1\le p\le p_{*}$ for some $p_{*}\in [1,2]$, which is why we additionally assume $s>1/2$ to make sure that $\|F|_{(0,\infty)}\|_\infty \lesssim \|F\|_{L^2_{s,\sloc}}$.
\end{remark}

\begin{proof}
The result for Bochner--Riesz multipliers in the second part of \cref{thm:main} is a direct consequence of \eqref{eq:reduced} without \cref{prop:COSY-original} involved, which can be seen in the same way as in the proof of Theorem 1.2 in \cite[Section 4]{Ni21}.

For the first part of \cref{thm:main}, we observe that
\[
\|F\|_{L^2_{s,\sloc}} \sim \|\tilde F\|_{L^2_{s,\sloc}}\quad\text{for } F(\lambda)=\tilde F(\sqrt \lambda).
\]
Thus, we may pass from $F(L)$ to the operator $F(\sqrt L)$. We use \cref{prop:COSY-original} for $p_0=1$. \Cref{prop:COSY-original} only shows that the operator $F(\sqrt L)$ is of weak type $(p,p)$, but the boundedness on $L^p$ can be easily recovered as follows: First, note that the case $p=1$ is excluded in the first part of \cref{thm:main}. Thus, suppose that the interval $(1,p_*]$ is non-empty and let $1< p \le p_*$. Suppose that $F:\R\to \C$ is a bounded Borel function satisfying
\[
\|F\|_{L^2_{s,\sloc}}<\infty \quad\text{for some }s>d\left( 1/ p - 1/ 2\right).
\]
We can choose $1<\tilde p<p$ such that $s>d\left( 1/\tilde p - 1/ 2\right)$. Applying \cref{prop:COSY-original} implies that $F(\sqrt L)$ is of weak type $(\tilde p,\tilde p)$. On the other hand, the assumption $s>1/2$ in \cref{cor:reduction} ensures that
\[
\|F|_{(0,\infty)}\|_\infty \lesssim \|F\|_{L^2_{s,\sloc}}.
\]
Hence, via interpolation with the $L^2$-$L^2$ bound provided by the spectral theorem, we may conclude that $F(\sqrt L)$ is bounded on $L^p$. The claimed estimate
\[
\|F(\sqrt L)\|_{p\to p}\le C_{p,s} \|F\|_{L^2_{s,\sloc}}
\]
in the first part of \cref{thm:main} follows by the closed graph theorem applied to the map $F\mapsto F(\sqrt L)$. Alternatively, this estimate can also be derived by inspecting the arguments of \cite{ChOuSiYa16} (see also \cite[Section 7]{Ni20}).

Now we verify the assumptions of \cref{prop:COSY-original}. Let $p_0=1$. The finite propagation speed property in (i) holds due to \cref{lem:finite-prop}, and the estimate in (iii) is automatically fulfilled by Theorem I.5 of \cite{ChOuSiYa16}. Since $p_0=1$, the restriction type condition $(\mathrm{ST}^\infty_{p_0, 2})$ is a consequence of a Plancherel estimate for the associated convolution kernel, see also \cite[Section III.5]{ChOuSiYa16}. More precisely, given a bounded Borel measurable function $F:\R\to\C$, since $L$ is a left-invariant operator, there is a convolution kernel $\mathcal{K}_{F(\sqrt L)}$ such that $F(\sqrt L) f = f * \mathcal{K}_{F(\sqrt L)}$ for all $f \in \mathcal S(\R)$. By \cite[Proposition 2]{Ch91}, there is some constant $C>0$ such that we have the Plancherel estimate
\[
\|\mathcal{K}_{F(\sqrt L)}\|_{L^2(G)}^2 = C \int_0^{\infty}|F(\sqrt\lambda)|^2 \lambda^{Q/2} \frac{d \lambda}{\lambda},
\]
where $Q$ is the homogeneous dimension of $G$. If $R>0$ and $F:\R\to\C$ is a bounded Borel function supported in $[0,R]$, then
\[
\int_0^{\infty}|F(\sqrt \lambda)|^2 \lambda^{Q/2} \frac{d \lambda}{\lambda} \lesssim R^{Q} \|F\|_\infty^2.
\]
Thus, we obtain
\[
\|F(\sqrt L)f\|_2 = \|f * \mathcal{K}_{F(\sqrt L)}\|_2 \le \|f\|_1 \|\mathcal{K}_{F(\sqrt L)}\|_2 \lesssim R^{Q/2} \|F\|_\infty \|f\|_1.
\]
By \cref{{eq:cc-volume}}, we have $|B_r^{d_{\mathrm{CC}}}(x,u)| = r^{Q} |B_1^{d_{\mathrm{CC}}}(0)|$. Hence, for $p_0=1$, the factor on the right-hand side of \cref{eq:ST-condition} is given by
\[
|B_r(x,u)|^{-1/2} ( Rr )^{Q/2} \sim R^{Q/2},
\]
which verifies the restriction type condition $(\mathrm{ST}^\infty_{p_0, 2})$. Thus the assumptions of \cref{prop:COSY-original} are satisfied.

Now suppose that the dyadic estimate \cref{eq:reduced} of \cref{cor:reduction} holds. Let $F:\R\to \C$ be a bounded Borel function such that
\[
\|F\|_{L^2_{s,\sloc}}<\infty \quad\text{for some }s>d\left( 1/ p - 1/ 2\right).
\]
To show that $F(\sqrt L)$ is of weak type $(p,p)$, we verify the required estimate \cref{eq:cond-1} of \cref{prop:COSY-original}. Note that we may assume without loss of generality that $F$ is an even function since $L$ is a positive operator. For $i\in\Z$, we let $F_i:=  F \chi_i$, where $(\chi_i)_{i\in\Z}$ is the dyadic decomposition from \cref{eq:dyadic}. Given $i,j\in \Z$, let $\iota:=i+j$ and
\[
G(\lambda):=F(2^i\lambda)\chi(\lambda),\quad\lambda\in\R,
\]
where $\chi$ is the bump function from \cref{eq:dyadic}. Then $G$ is an even function, and
\begin{align*}
(F_i)^{(j)}(\lambda)
& = (\widehat{F_i} \chi_j)^\vee(\lambda)
 = (2^i\hat G(2^i\cdot) \chi_j)^\vee(\lambda) \notag \\
& = (\hat G \chi_\iota)^\vee(2^{-i}\lambda)
 = G^{(\iota)}(2^{-i}\lambda).
\end{align*}
Let again $\delta_R$ be the dilation given by $\delta_R(x,u)=(Rx,R^2u)$. Then
\[
(F(\sqrt{L})f)\circ \delta_{R^{-1}} = F(R\sqrt L)(f \circ\delta_{R^{-1}}).
\]
This implies
\[
\norm{G^{(\iota)}(2^{-i}\sqrt L)}_{p\to p} = \norm{G^{(\iota)}(\sqrt L)}_{p\to p}.
\]
Hence, for $\iota \ge 0$, \cref{eq:reduced} yields
\begin{align*}
\norm{(F\chi_i)^{(j)}(\sqrt L)}_{ p\to  p}
& 
 = \norm{G^{(\iota)}(\sqrt L)}_{ p\to  p} \notag \\
& \lesssim 2^{-\varepsilon\iota} \Vert G^{(\iota)}\Vert_{L_s^2}
  \lesssim 2^{-\varepsilon\iota} \|F\|_{L^2_{s,\sloc}}.
\end{align*}
The case $\iota<0$ can be treated by the Mikhlin--Hörmander type result of \cite{Ch91} and \cite{MaMe90}. Suppose $\iota<0$. Let $\psi:=\sum_{i\le 2} \chi_i$. Then $\psi$ is supported in $[-8,8]$. We decompose $G^{(\iota)}$ as $G^{(\iota)}=G^{(\iota)}\psi + G^{(\iota)}(1-\psi)$. Since $G^{(\iota)}=G*\check \chi_\iota$, $\supp G\subseteq [-2,2]$ and $\check\chi\in\S(\R)$, we have
\begin{align}
\Big|\Big(\frac{d}{d\lambda}\Big)^\alpha  G^{(\iota)} (\lambda )\Big|
 & = \Big|\Big(\frac{d}{d\lambda}\Big)^\alpha \int_{-2}^2 2^\iota G(\tau) \check \chi(2^\iota(\lambda-\tau))\, d\tau  \Big| \notag \\
 & \lesssim_N 2^{\iota(\alpha+1)} \int_{-2}^2 \frac{|G(\tau)|}{(1+2^\iota|\lambda-\tau|)^N}\, d\tau,
 \quad \alpha\in\N. \label{eq:error-conv}
\end{align}
Let $Q$ denote again the homogeneous dimension of $G$. Choosing $N:=0$ in \cref{eq:error-conv} and using $2^{\iota(\alpha+1)}\le 1$, we obtain
\[
\|G^{(\iota)}\psi\|_{L^2_{Q/2+1,\sloc}}
\lesssim_{\psi} \norm{G}_2 \lesssim \|F\|_{L^2_{s,\sloc}}.
\]
On the other hand, choosing $N:=\alpha+1$ in \cref{eq:error-conv} yields in particular
\[
\Big|\Big(\frac{d}{d\lambda}\Big)^\alpha  G^{(\iota)} (\lambda )\Big| \lesssim |\lambda|^{-\alpha} \norm{G}_2 \quad \text{for } |\lambda|\ge 4.
\]
Since all derivatives of $1-\psi$ are Schwartz functions, Leibniz rule yields
\[
\norm{G^{(\iota)}(1-\psi)}_{L^2_{Q/2+1,\sloc}} \lesssim_{\psi} \norm{G}_2 \lesssim \|F\|_{L^2_{s,\sloc}}.
\]
Hence, applying the Mikhlin--Hörmander type result of \cite{Ch91} and \cite{MaMe90} yields
\begin{align*}
\norm{(F_i)^{(j)}(\sqrt L)}_{p\to p}
& = \norm{G^{(\iota)}(2^{-j}\sqrt L)}_{p\to p} \\
& = \norm{G^{(\iota)}(\sqrt L)}_{p\to p} \lesssim \|F\|_{L^2_{s,\sloc}}.
\end{align*}
This establishes the required condition \cref{eq:cond-1} of \cref{prop:COSY-original}. Thus we can apply \cref{prop:COSY-original} and we get that the operator $F(\sqrt L)$ is of weak type $(p,p)$.
\end{proof}

\section{Proof of the reduction of \texorpdfstring{\cref{thm:main}}{Theorem 1.1}}
\label{sec:main-proof}

Now suppose that $L=(X_1^2+\dots+X_{d_1}^2)$ is a sub-Laplacian on a Heisenberg type group $G$, where $X_1,\dots,X_{d_1}$ is an orthonormal basis of the first layer $\g_1$ of the stratification $\g=\g_1\oplus\g_2$. Let again $d_{\mathrm{CC}}$ denote the Carnot--Carathéodory distance associated with the vector fields $X_1,\dots,X_{d_1}$, let $d=d_1+d_2$ be the topological dimension, and $Q=d_1+2d_2$ be the homogeneous dimension of $G$. By \cref{cor:reduction}, the proof of \cref{thm:main} can be reduced to proving the statement. Given a suitable multiplier $F:\R\to\C$, we write again
\[
F^{(\iota)} := (\hat F \chi_\iota)^\vee\quad \text{for }\iota\in\Z,
\]
where $(\chi_\iota)_{\iota \in\Z}$ is the dyadic decomposition of \cref{eq:dyadic}.

\begin{proposition}\label{prop:reduced}
Suppose that $1\le p\le 2(d_2+1)/(d_2+3)$. If $s>d\left( 1/p - 1/2\right)$, then there exists some $\varepsilon>0$ such that
\[
\Vert F^{(\iota)}(\sqrt L) \Vert_{p\to p} \le C_{p,s} 2^{-\varepsilon\iota} \norm{F^{(\iota)}}_{L^2_s}\quad \text{for all } \iota \in\N
\]
and any even bounded Borel function $F\in L^2_s$ supported in $[-2,-1/2]\cup [1/2,2]$.
\end{proposition}

\begin{proof}
Let $\iota\in\N$ and $R:=2^\iota$. We proceed in several steps.

\smallskip

(1) \textit{Reduction to compactly supported functions.} Let $f\in D(G)$ be an integrable simple function on $G$. We first show that we may restrict to the case where $f$ is supported in $B_R^{d_{\mathrm{CC}}}(0)$. Since the metric space $(G,{d_{\mathrm{CC}}})$ endowed with the Lebesgue measure is a space of homogeneous type and separable, we may choose a decomposition into disjoint sets $B_j \subseteq B_R^{d_{\mathrm{CC}}}(x^{(j)},u^{(j)})$, $j\in\N$, $(x^{(j)},u^{(j)})\in G$ such that for every $\lambda\ge 1$, the number of overlapping dilated balls $B_{\lambda R}^{d_{\mathrm{CC}}}(x^{(j)},u^{(j)})$ is bounded by a constant $C(\lambda)\sim \lambda^Q$, which is independent of $\iota$. We decompose $f$ as
\[
f = \sum_{j=0}^\infty f_j\quad \text{where } f_j:=f|_{B_j}.
\]
Since $F$ is even, so is $\hat F$. As $\chi_\iota$ is even as well, the Fourier inversion formula provides
\[
F^{(\iota)} ( \sqrt L)f_j = \frac{1}{2\pi} \int_{2^{\iota-1}\le |\tau|\le 2^{\iota+1}} \chi_\iota(\tau) \hat F(\tau) \cos(\tau \sqrt L)f_j \,d\tau.
\]
Since $L$ satisfies the finite propagation speed property, $F^{(\iota)} ( \sqrt L)f_j$ is supported in $B_{3R}^{d_{\mathrm{CC}}}(x^{(j)},u^{(j)})$ by the formula above. Together with the bounded overlap of these balls, we obtain
\[
\norm{F^{(\iota)} ( \sqrt L)f}_p^p \lesssim \sum_{j=0}^\infty \Vert F^{(\iota)} ( \sqrt L)f_j \Vert_p^p.
\]
Altogether, since $L$ is left-invariant, it suffices to show
\begin{equation}\label{eq:reduction-1}
\big\| \textbf{1}_{B_{3R}^{d_{\mathrm{CC}}}(0)} F^{(\iota)}(\sqrt L) f \big\|_p \lesssim 2^{-\varepsilon\iota} \norm{F^{(\iota)}}_{L^2_s} \norm{f}_p
\end{equation}
whenever our initial function $f\in D(G)$ is supported in $B_R^{d_{\mathrm{CC}}}(0)$.

\smallskip

(2) \textit{Localizing the multiplier.} Next we show that we may replace the multiplier $F^{(\iota)}$ by $F^{(\iota)}\psi$, where $\psi$ is a smooth cut-off function which is compactly supported away from the origin. Using the dyadic decomposition $(\chi_\iota)_{\iota \in\Z}$ from \cref{eq:dyadic}, we put
\[
\psi:=\sum_{j=-2}^2 \chi_j.
\]
Then $1/8\le |\lambda| \le 8$ whenever $\lambda\in\supp\psi$, and $|\lambda| \notin (1/4,4)$ if $\lambda\in\supp(1-\psi)$. We decompose $F^{(\iota)}$ as
\[
F^{(\iota)}=F^{(\iota)}\psi + F^{(\iota)}(1-\psi).
\]
The second part of this decomposition can be treated by the Mikhlin--Hörmander type result of \cite{Ch91} and \cite{MaMe90}. Note that $F^{(\iota)}=F*\check \chi_\iota$, and $\check\chi\in\S(\R)$. Thus, given $\alpha\in\N$ and $N\in\N$, we have
\begin{align}
\Big|\Big(\frac{d}{d\lambda}\Big)^\alpha  F^{(\iota)} (\lambda )\Big|
 & = \Big|\Big(\frac{d}{d\lambda}\Big)^\alpha \int_{-2}^2 2^\iota F(\tau) \check \chi(2^\iota(\lambda-\tau))\, d\tau  \Big| \notag \\
 & \lesssim_N 2^{\iota(\alpha+1)} \int_{-2}^2 \frac{|F(\tau)|}{(1+2^\iota|\lambda-\tau|)^N}\, d\tau. \label{eq:error-conv-i}
\end{align}
Since $F$ is supported in $[-2,-1/2]\cup [1/2,2]$, choosing $N:=\alpha+2$ in \cref{eq:error-conv-i} gives
\[
\Big|\Big(\frac{d}{d\lambda}\Big)^\alpha  F^{(\iota)} (\lambda )\Big| \lesssim  2^{-\iota} \min\{|\lambda|^{-\alpha},1\} \norm{F}_2 \quad \text{whenever } |\lambda|\notin (1/4,4).
\]
This implies
\[
\norm{F^{(\iota)}(1-\psi)}_{L^2_{Q/2+1,\sloc}} \lesssim_{\psi} 2^{-\iota} \norm{F}_2.
\]
Hence, the Mikhlin--Hörmander type result of \cite{Ch91} and \cite{MaMe90} yields
\[
\Vert (F^{(\iota)}(1-\psi))(\sqrt L)\Vert_{p\to p}
 \lesssim 2^{-\iota} \norm{F}_2.
\]
Thus, instead of \cref{eq:reduction-1}, we are left proving
\begin{equation}
\big\| \textbf{1}_{B_{3R}^{d_{\mathrm{CC}}}(0)} (F^{(\iota)}\psi)(\sqrt L) f \big\|_p \lesssim 2^{-\varepsilon\iota} \norm{F^{(\iota)}}_{L^2_s} \norm{f}_p \label{eq:reduction-2}
\end{equation}
for all $f\in D(G)$ that are supported in $B_R^{d_{\mathrm{CC}}}(0)$.

\smallskip

(3) \textit{Truncation along the spectrum of $U$.} Next we decompose the operator $(F^{(\iota)}\psi)(\sqrt L)$ by a dyadic decomposition of $U$. For $\ell\in\N$, let the function $F_\ell^{(\iota)} : \R\times \R \to \C$ be given by
\[
F_\ell^{(\iota)}(\lambda,\rho) = (F^{(\iota)}\psi) (\sqrt \lambda)\chi_\ell(\lambda/\rho)\quad\text{for }\lambda\ge 0,\rho\neq 0
\]
and $F_\ell^{(\iota)}(\lambda,\rho)=0$ else.
We decompose the function on the left-hand side of \cref{eq:reduction-2} as
\begin{align}
\textbf{1}_{B_{3R}^{d_{\mathrm{CC}}}(0)} (F^{(\iota)}\psi)(\sqrt L) f
& = \textbf{1}_{B_{3R}^{d_{\mathrm{CC}}}(0)} \bigg(\sum_{\ell = 0}^\iota + \sum_{\ell = \iota + 1}^\infty \bigg) F_\ell^{(\iota)}(L,U) f \notag\\
& =: g_{\le \iota} + g_{> \iota}.\label{eq:decomp-trunc}
\end{align}
The sum over $\ell>\iota$ can be treated directly by the restriction type estimate of \cref{prop:restriction}. Recall that $|B_R^{d_{\mathrm{CC}}}(0)| \sim R^Q$ by \cref{eq:cc-volume}. Hence, Hölder's inequality with $1/q=1/p-1/2$ and the restriction type estimate \cref{eq:restriction} imply
\begin{align*}
\Vert  g_{> \iota}\Vert_p
 & \lesssim R^{Q/q} \Vert g_{> \iota}\Vert_2 \\
 & \le R^{Q/q}\bigg\Vert \sum_{\ell = \iota + 1}^\infty F_\ell^{(\iota)}(L,U) f \bigg\Vert_2\\
 & \lesssim 2^{\iota(Q-d_2)/q} \norm{F^{(\iota)}\psi}_2 \norm{f}_p \\
 & \lesssim_\psi 2^{-\varepsilon\iota}  \norm{F^{(\iota)}}_{L^2_s} \norm{f}_p
\end{align*}
if we choose $0<\varepsilon<s-d/q$. (Note that $\|F^{(\iota)}\|_2\sim 2^{-\iota s}\|F^{(\iota)}\|_{L^2_s}$ due to the localization in frequency.) Thus, we are done once we have also treated the sum over $\ell\in\{-1,\dots, \iota\}$, that is, it remains to show
\begin{equation}\label{eq:small-eigenvalues}
\Vert g_{\le \iota} \Vert_p
 \lesssim  2^{-\varepsilon\iota} \norm{F^{(\iota)}}_{L^2_s} \norm{f}_p.
\end{equation}

\smallskip

(4) \textit{The support of the convolution kernel.}
Let $\mathcal K^{(\iota)}_\ell$ be the convolution kernel of the operator $F^{(\iota)}_\ell(L,U)$. By \cref{eq:cc-equivalence}, there is a constant $C>0$ such that
\[
B_R^{d_{\mathrm{CC}}}(0)
 \subseteq B_{CR}(0) \times B_{CR^2}(0).
\]
Hence the function $f$ is supported in a Euclidean ball of dimension $R\times R^2$. In view of the finite propagation speed property which we exploited in part (1) of the proof, we may think of $\mathcal K^{(\iota)}_\ell$ being supported in a ball of dimension $R\times R^2$ as well (which is of course not quite true since we replaced the multiplier $F^{(\iota)}$ by $F^{(\iota)}\psi$). In the following, we show that the convolution kernel $\mathcal K^{(\iota)}_\ell$ of the truncated multiplier is essentially supported in an even smaller ball of dimension $R_\ell R^\gamma \times R^2$, where $R_\ell:=2^\ell$ and $\gamma>0$ will be a number chosen sufficiently small, depending only on the parameters $s,p,d_1,d_2$. For convenience, we introduce the following notation: We will write
\[
A\lesssim_\iota B
\]
whenever $A\le R^{C(p,d_1,d_2)\gamma} B$ for some constant $C(p,d_1,d_2)>0$ depending only on the parameters $p,d_1,d_2$.

Given $\ell \in\{0,\dots,\iota\}$, we split the Euclidean ball $B_{CR}(0) \times B_{CR^2}(0)$ into a grid with respect to the first layer, which gives a decomposition of $\supp f\subseteq B_R^{d_{\mathrm{CC}}}(0)$ such that
\[
\supp f = \bigcup_{m=1}^{M_{\ell}} B_{m}^{(\ell)},
\]
where $B_{m}^{(\ell)} \subseteq B_{CR_\ell}(x_{m}^{(\ell)}) \times B_{CR^2}(0)$ are disjoint subsets, and $|x_{m}^{(\ell)} - x_{m'}^{(\ell)}| > R_\ell/2$ for $m\neq m'$. Then the number $M_\ell$ of balls in this decomposition is bounded by 
\begin{equation}\label{eq:number-balls}
M_\ell\lesssim (R/R_\ell)^{d_1} = 2^{d_1(\iota-\ell)}.
\end{equation}
Moreover, given $\gamma>0$, the number of overlapping balls
\[
\tilde B_{m}^{(\ell)}:=B_{2 C R_\ell R^\gamma}(x_{m}^{(\ell)}) \times B_{9CR^2}(0),\quad 1\le m\le M_{\ell}
\] 
can be bounded by a constant $N_\gamma \lesssim_\iota 1$ (which is independent of $\ell$). We decompose the function $f$ as
\[
f = \sum_{m=1}^{M_{\ell}} f|_{B_{m}^{(\ell)}}.
\]
In the following, we show that the function
\[
g_{m}^{(\ell)}:= \textbf{1}_{B_{3R}^{d_{\mathrm{CC}}}(0)} F_\ell^{(\iota)}(L,U) (f|_{B_{m}^{(\ell)}})
\]
is essentially supported in the ball $\tilde B_{m}^{(\ell)}$. We decompose the function $g_{\le \iota}$ of \cref{eq:decomp-trunc} as
\begin{equation}\label{eq:decomp-ess-supp}
g_{\le \iota} = \sum_{\ell=0}^\iota \sum_{m=1}^{M_{\ell}} g_{m}^{(\ell)}|_{\tilde B_{m}^{(\ell)}} + \sum_{\ell=0}^\iota \sum_{m=1}^{M_{\ell}} g_{m}^{(\ell)}|_{\g\setminus\tilde B_{m}^{(\ell)}}
=: g_{\le \iota}^{(1)} + g_{\le \iota}^{(2)}.
\end{equation}
To show that the second summand is negligible (in the sense of \cref{eq:error-Lp}), we interpolate between $L^1$ and $L^2$ via the Riesz--Thorin interpolation theorem. For the $L^1$-$L^1$ estimate, note that $(x,u)\in (\g\setminus\tilde B_{m}^{(\ell)})\cap B_{3R}^{d_{\mathrm{CC}}}(0)$ and $(x',u')\in B_{m}^{(\ell)}$ imply
\[
|x-x'|\ge CR_\ell R^\gamma.
\]
Let $\mathcal K_\ell^{(\iota)}$ be the convolution kernel associated with $F_\ell^{(\iota)}(L,U)$. Then
\[
F_\ell^{(\iota)}(L,U)(f|_{B_{m}^{(\ell)}})(x,u) = (f|_{B_{m}^{(\ell)}}) * \mathcal K_\ell^{(\iota)}(x,u),
\]
and we obtain
\begin{align}
\norm{g_{\le \iota}^{(2)}}_1
& \le \int_G \sum_{\ell=0}^\iota \sum_{m=1}^{M_{\ell}} \textbf{1}_{(\g\setminus\tilde B_{m}^{(\ell)}) \cap B_{3R}^{d_{\mathrm{CC}}}(0)}(x,u)\,\big| (f|_{B_{m}^{(\ell)}}) * \mathcal  K_\ell^{(\iota)}(x,u)\big| \,d(x,u) \notag \\
& \le \int_{B_R^{d_{\mathrm{CC}}}(0)}  \sum_{\ell=0}^\iota \int_{A_\ell(x')} \sum_{m=1}^{M_{\ell}} \big|f|_{B_{m}^{(\ell)}}(x',u')\big| \notag \\
& \hspace{3cm} \times \big|\mathcal K_\ell^{(\iota)}\big((x',u')^{-1}(x,u)\big)\big| \,d(x,u)\,d(x',u') \notag \\
& = \int_{B_R^{d_{\mathrm{CC}}}(0)} |f(x',u')| \,\kappa_\gamma(x',u') \,d(x',u'), \label{eq:error-L1-2}
\end{align}
where
\[
A_\ell(x') := \{ (x,u)\in B_{3R}^{d_{\mathrm{CC}}}(0) : |x-x'|\ge CR_\ell R^\gamma \}
\]
and
\[
\kappa_\gamma(x',u') := \sum_{\ell=0}^\iota  \int_{A_\ell(x')} \big|\mathcal K_\ell^{(\iota)}\big((x',u')^{-1}(x,u)\big) \big| \,d(x,u).
\]
Given $N\in\N$, the Cauchy--Schwarz inequality yields
\begin{align}
\int_{A_\ell(x')} & |\mathcal K_\ell^{(\iota)}\big((x',u')^{-1}(x,u)\big) | \,d(x,u) \notag \\
& \lesssim (R_\ell R^\gamma)^{-N} \int_{A_\ell(x')} \big||x-x'|^N \mathcal K_\ell^{(\iota)}(x-x',u-u'- \tfrac 1 2 [x',x] ) \big| \,d(x,u) \notag \\
& \lesssim (R_\ell R^\gamma)^{-N} R^{Q/2} \bigg( \int_G \big||x|^N \mathcal K_\ell^{(\iota)}(x,u) \big|^2 \,d(x,u) \bigg)^{1/2}. \label{eq:error-CS}
\end{align}
In the last line we used again that $|B_{3R}^{d_{\mathrm{CC}}}(0)|\sim R^{ Q}$ by \cref{eq:cc-volume}. 
By \cref{prop:weighted-plancherel}, the second factor of \cref{eq:error-CS} can be estimated by
\[
\int_G \big||x|^N \mathcal K_\ell^{(\iota)}(x,u) \big|^2 \,d(x,u)
 \lesssim_N R_\ell^{2N-d_2} \norm{F^{(\iota)}}_2^2.
\]
Hence
\[
\kappa_\gamma(x',u')
\lesssim_N \sum_{\ell=0}^\iota R^{-\gamma N + Q/2} R_\ell^{- d_2/2} \norm{F^{(\iota)}}_2
\lesssim R^{-\gamma N + Q/2} \norm{F^{(\iota)}}_2.
\]
Altogether, with \cref{eq:error-L1-2}, we have
\begin{equation}\label{eq:error-L1}
\norm{g_{\le \iota}^{(2)}}_1 \lesssim_N R^{-\gamma N + Q/2} \norm{F^{(\iota)}}_2 \norm{f}_1.
\end{equation}
For the $L^2$-$L^2$ estimate, we use the trivial estimate
\begin{equation}\label{eq:error-L2-triangle}
\norm{g_{\le \iota}^{(2)}}_2
 = \bigg\Vert \sum_{\ell=0}^\iota \sum_{m=1}^{M_{\ell}} g_{m}^{(\ell)}|_{\g\setminus\tilde B_{m}^{(\ell)}} \bigg\Vert_2
 \le \sum_{\ell=0}^\iota \sum_{m=1}^{M_{\ell}} \Vert g_{m}^{(\ell)}\Vert_2.
\end{equation}
Since $\norm{\chi_\ell}_\infty\le 1$, each summand of \cref{eq:error-L2-triangle} can be estimated by
\[
\norm{g_{m}^{(\ell)}}_2 \le \norm{ F_\ell^{(\iota)}(L,U) (f|_{B_{m}^{(\ell)}})}_2
\le \norm{ F^{(\iota)} }_\infty \norm{f|_{B_{m}^{(\ell)}}}_2.
\]
Using Hölder's inequality on the right-hand side of \cref{eq:error-L2-triangle} yields
\[
\norm{g_{\le \iota}^{(2)}}_2
 \le \norm{F^{(\iota)}}_\infty (\iota+1) M_{\ell}^{1/2} \norm{f}_2.
\]
Together with \cref{eq:number-balls} and the Sobolev embedding
\[
\norm{F^{(\iota)}}_\infty \lesssim \norm{F^{(\iota)}}_{L^2_{\alpha}}
\sim R^{\alpha} \norm{F^{(\iota)}}_2,\quad \alpha > 1/2,
\]
we obtain
\begin{align}
\norm{g_{\le \iota}^{(2)}}_2
 & \lesssim R^{\alpha} \norm{F^{(\iota)}}_2 (\iota+1)M_{\ell}^{1/2} \norm{f}_2 \notag \\
 & \lesssim R^{\alpha'} \norm{F^{(\iota)}}_2 \norm{f}_2\quad\text{for } \alpha'>\alpha+d_1/2. \label{eq:error-L2}
\end{align}
Applying the Riesz--Thorin interpolation theorem with \cref{eq:error-L1} and \cref{eq:error-L2} and choosing $N=N(\gamma)\in \N$ sufficiently large in \cref{eq:error-L1} yields
\begin{equation}\label{eq:error-Lp}
\norm{g_{\le \iota}^{(2)}}_p
\lesssim_N 2^{-\iota N} \norm{F^{(\iota)}}_2 \norm{f}_p.
\end{equation}
In view of the decomposition \cref{eq:decomp-ess-supp}, for showing \cref{eq:small-eigenvalues}, it thus remains to prove
\begin{equation}\label{eq:main-term}
\Vert g_{\le \iota}^{(1)} \Vert_p
 = \bigg\Vert \sum_{\ell=0}^\iota \sum_{m=1}^{M_{\ell}} \tilde g_{m}^{(\ell)} \bigg\Vert_p
 \lesssim  2^{-\varepsilon\iota} \norm{F^{(\iota)}}_{L^2_s} \norm{f}_p,
\end{equation}
where
\[
\tilde g_{m}^{(\ell)}:= \textbf{1}_{\tilde B_m^{(\ell)}\cap B_{3R}^{d_{\mathrm{CC}}}(0)} F_\ell^{(\iota)}(L,U) (f|_{B_{m}^{(\ell)}}).
\]
On a formal level, this means that we may indeed assume that the convolution kernel $\mathcal K_\ell^{(\iota)}$ is supported in a ball of dimension $R_\ell R^\gamma\times R^2$.

\smallskip

(5) \textit{The main contribution.} Hölder's inequality and the bounded overlapping property of the balls $\tilde B_{m}^{(\ell)}$ imply
\begin{equation}\label{eq:main-term-overlap}
\norm{g_{\le \iota}^{(1)}}_p^p
 \lesssim_\iota (\iota+1)^{p-1} \sum_{\ell=0}^\iota \sum_{m=1}^{M_{\ell}} \Vert\tilde g_{m}^{(\ell)}\Vert_p^p.
\end{equation}
Using Hölder's inequality together with the restriction type estimate \cref{eq:restriction} yields
\begin{align*}
\Vert\tilde g_{m}^{(\ell)}\Vert_p
& \lesssim ( (R_\ell R^\gamma)^{d_1} R^{2d_2})^{1/q} \norm{g_{m}^{(\ell)}}_2 \\
& \lesssim_\iota (R_\ell^{d_1} R^{2 d_2})^{1/q} \norm{g_{m}^{(\ell)}}_2 \\
& \lesssim (R_\ell^{d_1-d_2} R^{2 d_2 })^{1/q} \norm{F^{(\iota)}}_2 \norm{f|_{B_{m}^{(\ell)}}}_p. 
\end{align*}
Plugging this estimate into the right-hand side of \cref{eq:main-term-overlap} and using the fact that the functions $f|_{B_{m}^{(\ell)}}$ have disjoint support, we obtain
\[
\norm{g_{\le \iota}^{(1)}}_p^p \lesssim_\iota (\iota+1)^{p-1} \sum_{\ell=0}^\iota (R_\ell^{d_1-d_2} R^{2 d_2 })^{p/q} \norm{F^{(\iota)}}_2^p \norm{f}_p^p.
\]
Choosing $\gamma>0$ small enough, we may conclude that
\begin{align*}
\norm{g_{\le \iota}^{(1)}}_p^p
\lesssim 2^{- \varepsilon \iota} \sum_{\ell=0}^\iota (R_\ell^{d_1-d_2} R^{d_2-d_1})^{p/q} \norm{F^{(\iota)}}_{L^2_s}^p  \norm{f}_p^p
\end{align*}
for some $0<\varepsilon<s-d/q$. Recall that $R_\ell =2^\ell$ and $R=2^\iota$. Since $d_1> d_2$, we have
\[
 \sum_{\ell=0}^\iota \Big(\Big(\frac{R_\ell}{R}\Big)^{d_1-d_2}\Big)^{p/q}
 \le C_p.
\]
Altogether, we obtain
\[
\norm{g_{\le \iota}^{(1)}}_p
 \lesssim 2^{- \tilde\varepsilon \iota} \norm{F^{(\iota)}}_{L^2_s} \norm{f}_p
\]
for some $\tilde\varepsilon>0$. This is \cref{eq:main-term}, so the proof is concluded.
\end{proof}

\section{Remarks on weighted restriction type estimates for sub-Laplacians}
\label{sec:weighted}

In \cite{ChOu16}, Chen and Ouhabaz proved a spectral multiplier theorem for the Grushin operator $\G=-\Delta_x-|x|^2\Delta_u$ acting on $\R^{d_1}\times \R^{d_2}$ by using a weighted restriction type estimate of the form
\[
\norm{|x|^\alpha F(\sqrt \mathcal G) f}_{L^2(\R^{d_1}\times \R^{d_2})} \le C_{p,\alpha} \norm{F}_{L^2(\R)} \norm{f}_{L^p(\R^{d_1}\times \R^{d_2})},
\]
where $\alpha>0$ and $F:\R\to \C$ is a bounded Borel function supported in $[1/4,1]$. Let $L$ denote again a sub-Laplacian on a Heisenberg type group $G$ with Lie algebra $\mathfrak g=\mathfrak g_1\oplus\mathfrak g_2$ with layers of dimension $d_1$ and $d_2$, respectively. Then
\[
L f = \mathcal G f
\]
for any $\mathfrak g_1$-radial function on the Heisenberg type group $G$, i.e., a function on $G$ which only depends on $|x|$ (with $x\in\mathfrak g_1$) and $u\in\mathfrak g_2$ (where we identify again $G$ with its Lie algebra $\mathfrak g$, which is in turn identified with $\R^{d_1}\times \R^{d_2}$). In view of this close relationship, one might hope that the approach of Chen and Ouhabaz can also be applied in the setting of Heisenberg type groups. However, a crucial ingredient of their approach is the sub-elliptic estimate
\begin{equation}\label{eq:sub-elliptic-2}
\norm{|x|^\alpha g}_{L^2(\R^{d_1})} \le C_\alpha \norm{ |\mu|^{-\alpha} (H^\mu)^{\alpha/2} g}_{L^2(\R^{d_1})},\quad g\in L^2(\R^{d_1}),
\end{equation}
where $H^\mu =- \Delta_x  + \tfrac 1 4 |x|^2 |\mu|^2$ denotes again the rescaled Hermite operator on $\R^{2n}$. Unfortunately, the analogous estimate of \cref{eq:sub-elliptic-2} in our setting, where $H^\mu$ is replaced by the $\mu$-twisted Laplacian $L^\mu$ of \cref{eq:twisted-laplace}, fails. We will prove in the following that the estimate \cref{eq:sub-elliptic-2} where $H^\mu$ is replaced by $L^\mu$ is false for $\alpha=1$. (The approach of \cite{ChOu16} requires to choose $0<\alpha<d_2(1/p-1/2)$ as large as possible, so large values of $\alpha$ are the crucial ones.) 

Via \cref{eq:rotation-twisted} and a linear substitution, the estimate 
\[
\norm{|x|^\alpha g}_{L^2(\g_1)} \le C_\alpha \norm{ |\mu|^{-\alpha} (L^\mu)^{\alpha/2} g}_{L^2(\g_1)}
\]
is equivalent to
\[
\norm{|z|^\alpha g}_{L^2(\R^{2n})} \le C_\alpha \norm{ |\mu|^{-\alpha} (L_0^{|\mu|})^{\alpha/2} g}_{L^2(\R^{2n})},
\]
where $L_0^{|\mu|}$ is the twisted Laplacian of \cref{eq:classical-twisted}. Rescaling with $|\mu|$, we may restrict to the case $|\mu|=1$. Let $A:=L_0^1$. Then, by \cref{eq:classical-twisted},
\[
A = -\Delta_z + \tfrac 1 4 |z|^2 - i N,
\]
where, when writing $z=(a_1,\dots,a_n,b_1,\dots,b_n)$,
\[
N = \sum_{j=1}^n (a_j \partial_{b_j} - b_j \partial_{a_j}).
\]
Now, suppose that
\begin{equation}\label{false-sub-elliptic-1}
\norm{|z| g}_{L^2(\R^{2n})} \le C \norm{A^{1/2} g}_{L^2(\R^{2n})} \quad\text{for all } g\in \S(\R^{2n}).
\end{equation}
Recall that the matrix coefficients $\Phi_{\nu,\nu'}$ of the Schrödinger representation $\pi_1(\cdot,0)$ given by \cref{eq:special-hermite} are eigenfunctions of $A$ with
\begin{equation}\label{eq:eigenvalue-1}
A \Phi_{\nu,\nu'} = (2|\nu'|_1+n)\Phi_{\nu,\nu'}.
\end{equation}
On the other hand, the functions $\Phi_{\nu,\nu'}$ are also eigenfunctions of $H=-\Delta_z + \tfrac 1 4 |z|^2$ by Equation~(1.3.25) of \cite{Th93}, with
\begin{equation}\label{eq:eigenvalue-2}
H \Phi_{\nu,\nu'} = (|\nu|_1+|\nu'|_1+n)\Phi_{\nu,\nu'}.
\end{equation}
When writing $\zeta=(\alpha_1,\dots,\alpha_n,\beta_1,\dots,\beta_n)$, direct computation shows
\[
\widehat{Ag}(\zeta) = \hat A \hat g (\zeta),
\]
where the operator $\hat A$ is given by
\[
\hat A := |\zeta|^2 - \tfrac 1 4 \Delta_\zeta + i \sum_{j=1}^n ( \partial_{\alpha_j} \beta_j - \partial_{\beta_j} \alpha_j)
\]
and $\widehat\cdot$ denotes the $2n$-dimensional Fourier transform given by
\[
\hat g(\zeta) = \int_{\R^{2n}} g(z) e^{-i\langle \zeta, z\rangle_{\R^{2n}}} \, dz, \quad \zeta\in\R^{2n}.
\]
Since $\hat A(g(2\zeta))=(Ag)(2\zeta)$, the estimate \cref{false-sub-elliptic-1} together with Plancherel's theorem implies 
\begin{equation}\label{false-sub-elliptic-2}
\norm{(-\Delta_z)^{1/2} g}_{L^2(\R^{2n})} \le C \norm{A^{1/2} g}_{L^2(\R^{2n})}\quad \text{for all } g\in \S(\R^{2n}).
\end{equation}
Setting $g:=\Phi_{\nu,\nu'}$ and using \cref{eq:eigenvalue-2}, \cref{false-sub-elliptic-1}, \cref{false-sub-elliptic-2}, \cref{eq:eigenvalue-1}, we obtain
\begin{align*}
(|\nu|_1+|\nu'|_1+n)\norm{g}_{L^2(\R^{2n})}^2
 & = \norm{H^{1/2} g}_{L^2(\R^{2n})}^2 \\
 & = \big((-\Delta_z+\tfrac 1 4 |z|^2)g,g \big) \\
 & = \big\Vert (-\Delta_z)^{1/2} g\big\Vert _{L^2(\R^{2n})}^2 + \tfrac 1 4 \big\Vert |z| g\big\Vert _{L^2(\R^{2n})}^2 \\
 & \le C \norm{A^{1/2}g}_{L^2(\R^{2n})}^2 \\
 & = C (2|\nu'|_1+n) \norm{g}_{L^2(\R^{2n})}^2.
\end{align*}
Now fixing $\nu'\in\N^n$ and letting $|\nu|\to\infty$ yields a contradiction, whence the assumed estimate \cref{false-sub-elliptic-1} is indeed false.

\begin{bibdiv}
\begin{biblist}

\bib{AhCoMaMue20}{article}{
      author={Ahrens, Julian},
      author={Cowling, Michael~G.},
      author={Martini, Alessio},
      author={M\"{u}ller, Detlef},
       title={Quaternionic spherical harmonics and a sharp multiplier theorem
  on quaternionic spheres},
        date={2020},
        ISSN={0025-5874},
     journal={Math. Z.},
      volume={294},
      number={3-4},
       pages={1659\ndash 1686},
         url={https://doi.org/10.1007/s00209-019-02313-w},
      review={\MR{4074054}},
}

\bib{AsWa65}{article}{
      author={Askey, Richard},
      author={Wainger, Stephen},
       title={Mean convergence of expansions in {L}aguerre and {H}ermite
  series},
        date={1965},
        ISSN={0002-9327},
     journal={Amer. J. Math.},
      volume={87},
       pages={695\ndash 708},
         url={https://doi.org/10.2307/2373069},
      review={\MR{182834}},
}

\bib{Az74}{article}{
      author={Azoff, Edward~A.},
       title={Borel measurability in linear algebra},
        date={1974},
        ISSN={0002-9939},
     journal={Proc. Amer. Math. Soc.},
      volume={42},
       pages={346\ndash 350},
         url={https://doi.org/10.2307/2039503},
      review={\MR{327799}},
}

\bib{CaCi13}{article}{
      author={Casarino, Valentina},
      author={Ciatti, Paolo},
       title={A restriction theorem for {M}\'{e}tivier groups},
        date={2013},
        ISSN={0001-8708},
     journal={Adv. Math.},
      volume={245},
       pages={52\ndash 77},
         url={https://doi.org/10.1016/j.aim.2013.06.015},
      review={\MR{3084423}},
}

\bib{CaCoMaSi17}{article}{
      author={Casarino, Valentina},
      author={Cowling, Michael~G.},
      author={Martini, Alessio},
      author={Sikora, Adam},
       title={Spectral multipliers for the {K}ohn {L}aplacian on forms on the
  sphere in {$\mathbb C^n$}},
        date={2017},
        ISSN={1050-6926},
     journal={J. Geom. Anal.},
      volume={27},
      number={4},
       pages={3302\ndash 3338},
         url={https://doi.org/10.1007/s12220-017-9806-3},
      review={\MR{3708016}},
}

\bib{Ch91}{article}{
      author={Christ, Michael},
       title={{$L^p$} bounds for spectral multipliers on nilpotent groups},
        date={1991},
        ISSN={0002-9947},
     journal={Trans. Amer. Math. Soc.},
      volume={328},
      number={1},
       pages={73\ndash 81},
         url={https://doi.org/10.2307/2001877},
      review={\MR{1104196}},
}

\bib{ChOu16}{article}{
      author={Chen, Peng},
      author={Ouhabaz, El~Maati},
       title={Weighted restriction type estimates for {G}rushin operators and
  application to spectral multipliers and {B}ochner-{R}iesz summability},
        date={2016},
        ISSN={0025-5874},
     journal={Math. Z.},
      volume={282},
      number={3-4},
       pages={663\ndash 678},
         url={https://doi.org/10.1007/s00209-015-1558-9},
      review={\MR{3473637}},
}

\bib{ChOuSiYa16}{article}{
      author={Chen, Peng},
      author={Ouhabaz, El~Maati},
      author={Sikora, Adam},
      author={Yan, Lixin},
       title={Restriction estimates, sharp spectral multipliers and endpoint
  estimates for {B}ochner-{R}iesz means},
        date={2016},
        ISSN={0021-7670},
     journal={J. Anal. Math.},
      volume={129},
       pages={219\ndash 283},
         url={https://doi.org/10.1007/s11854-016-0021-0},
      review={\MR{3540599}},
}

\bib{CoSi01}{article}{
      author={Cowling, Michael},
      author={Sikora, Adam},
       title={A spectral multiplier theorem for a sublaplacian on
  {$\mathrm{SU}(2)$}},
        date={2001},
        ISSN={0025-5874},
     journal={Math. Z.},
      volume={238},
      number={1},
       pages={1\ndash 36},
         url={https://doi.org/10.1007/PL00004894},
      review={\MR{1860734}},
}

\bib{Fe73}{article}{
      author={Fefferman, Charles},
       title={A note on spherical summation multipliers},
        date={1973},
        ISSN={0021-2172},
     journal={Israel J. Math.},
      volume={15},
       pages={44\ndash 52},
         url={https://doi.org/10.1007/BF02771772},
      review={\MR{320624}},
}

\bib{FoSt82}{book}{
      author={Folland, Gerald~B.},
      author={Stein, Elias~M.},
       title={Hardy spaces on homogeneous groups},
      series={Mathematical Notes},
   publisher={Princeton University Press, Princeton, N.J.; University of Tokyo
  Press, Tokyo},
        date={1982},
      volume={28},
        ISBN={0-691-08310-X},
      review={\MR{657581}},
}

\bib{GuHaSi13}{article}{
      author={Guillarmou, Colin},
      author={Hassell, Andrew},
      author={Sikora, Adam},
       title={Restriction and spectral multiplier theorems on asymptotically
  conic manifolds},
        date={2013},
        ISSN={2157-5045},
     journal={Anal. PDE},
      volume={6},
      number={4},
       pages={893\ndash 950},
         url={https://doi.org/10.2140/apde.2013.6.893},
      review={\MR{3092733}},
}

\bib{He93}{article}{
      author={Hebisch, Waldemar},
       title={Multiplier theorem on generalized {H}eisenberg groups},
        date={1993},
        ISSN={0010-1354},
     journal={Colloq. Math.},
      volume={65},
      number={2},
       pages={231\ndash 239},
         url={https://doi.org/10.4064/cm-65-2-231-239},
      review={\MR{1240169}},
}

\bib{Hoe67}{article}{
      author={H{\"{o}}rmander, Lars},
       title={Hypoelliptic second order differential equations},
        date={1967},
        ISSN={0001-5962},
     journal={Acta Math.},
      volume={119},
       pages={147\ndash 171},
         url={https://doi.org/10.1007/BF02392081},
      review={\MR{222474}},
}

\bib{KoRi07}{article}{
      author={Koch, Herbert},
      author={Ricci, Fulvio},
       title={Spectral projections for the twisted {L}aplacian},
        date={2007},
        ISSN={0039-3223},
     journal={Studia Math.},
      volume={180},
      number={2},
       pages={103\ndash 110},
         url={https://doi.org/10.4064/sm180-2-1},
      review={\MR{2314091}},
}

\bib{LeRoSe14}{incollection}{
      author={Lee, Sanghyuk},
      author={Rogers, Keith~M.},
      author={Seeger, Andreas},
       title={Square functions and maximal operators associated with radial
  {F}ourier multipliers},
        date={2014},
   booktitle={Advances in analysis. {T}he legacy of {E}lias {M}. {S}tein},
      series={Princeton Mathematical Series},
      volume={50},
   publisher={Princeton University Press, Princeton, NJ},
       pages={273\ndash 302},
      review={\MR{3329855}},
}

\bib{Ma11}{article}{
      author={Martini, Alessio},
       title={Spectral theory for commutative algebras of differential
  operators on {L}ie groups},
        date={2011},
        ISSN={0022-1236},
     journal={J. Funct. Anal.},
      volume={260},
      number={9},
       pages={2767\ndash 2814},
         url={https://doi.org/10.1016/j.jfa.2011.01.008},
      review={\MR{2772351}},
}

\bib{Ma12}{article}{
      author={Martini, Alessio},
       title={Analysis of joint spectral multipliers on {L}ie groups of
  polynomial growth},
        date={2012},
        ISSN={0373-0956},
     journal={Ann. Inst. Fourier (Grenoble)},
      volume={62},
      number={4},
       pages={1215\ndash 1263},
         url={http://aif.cedram.org/item?id=AIF_2012__62_4_1215_0},
      review={\MR{3025742}},
}

\bib{Ma15}{article}{
      author={Martini, Alessio},
       title={Spectral multipliers on {H}eisenberg-{R}eiter and related
  groups},
        date={2015},
        ISSN={0373-3114},
     journal={Ann. Mat. Pura Appl. (4)},
      volume={194},
      number={4},
       pages={1135\ndash 1155},
         url={https://doi.org/10.1007/s10231-014-0414-6},
      review={\MR{3357697}},
}

\bib{Me84}{incollection}{
      author={Melrose, Richard},
       title={Propagation for the wave group of a positive subelliptic
  second-order differential operator},
        date={1986},
   booktitle={Hyperbolic equations and related topics ({K}atata/{K}yoto,
  1984)},
   publisher={Academic Press, Boston, MA},
       pages={181\ndash 192},
      review={\MR{925249}},
}

\bib{MaMue14a}{article}{
      author={Martini, Alessio},
      author={M\"{u}ller, Detlef},
       title={A sharp multiplier theorem for {G}rushin operators in arbitrary
  dimensions},
        date={2014},
        ISSN={0213-2230},
     journal={Rev. Mat. Iberoam.},
      volume={30},
      number={4},
       pages={1265\ndash 1280},
         url={https://doi.org/10.4171/RMI/814},
      review={\MR{3293433}},
}

\bib{MaMue14b}{article}{
      author={Martini, Alessio},
      author={M\"{u}ller, Detlef},
       title={Spectral multiplier theorems of {E}uclidean type on new classes
  of two-step stratified groups},
        date={2014},
        ISSN={0024-6115},
     journal={Proc. Lond. Math. Soc. (3)},
      volume={109},
      number={5},
       pages={1229\ndash 1263},
         url={https://doi.org/10.1112/plms/pdu033},
      review={\MR{3283616}},
}

\bib{MaMue16}{article}{
      author={Martini, Alessio},
      author={M\"{u}ller, Detlef},
       title={Spectral multipliers on 2-step groups: topological versus
  homogeneous dimension},
        date={2016},
        ISSN={1016-443X},
     journal={Geom. Funct. Anal.},
      volume={26},
      number={2},
       pages={680\ndash 702},
         url={https://doi.org/10.1007/s00039-016-0365-8},
      review={\MR{3513881}},
}

\bib{MaMe90}{article}{
      author={Mauceri, Giancarlo},
      author={Meda, Stefano},
       title={Vector-valued multipliers on stratified groups},
        date={1990},
        ISSN={0213-2230},
     journal={Rev. Mat. Iberoam.},
      volume={6},
      number={3-4},
       pages={141\ndash 154},
         url={https://doi.org/10.4171/RMI/100},
      review={\MR{1125759}},
}

\bib{MaMueNi19}{article}{
      author={Martini, Alessio},
      author={M\"{u}ller, Detlef},
      author={Nicolussi~Golo, Sebastiano},
       title={Spectral multipliers and wave equation for sub-{L}aplacians:
  lower regularity bounds of {E}uclidean type},
        date={2023},
        ISSN={1435-9855,1435-9863},
     journal={J. Eur. Math. Soc. (JEMS)},
      volume={25},
      number={3},
       pages={785\ndash 843},
         url={https://doi.org/10.4171/jems/1191},
      review={\MR{4577953}},
}

\bib{MueRiSt96}{article}{
      author={M{\"{u}}ller, Detlef},
      author={Ricci, Fulvio},
      author={Stein, Elias~M.},
       title={Marcinkiewicz multipliers and multi-parameter structure on
  {H}eisenberg (-type) groups. {II}},
        date={1996},
        ISSN={0025-5874},
     journal={Math. Z.},
      volume={221},
      number={2},
       pages={267\ndash 291},
         url={https://doi.org/10.1007/BF02622116},
      review={\MR{1376298}},
}

\bib{MaSi12}{article}{
      author={Martini, Alessio},
      author={Sikora, Adam},
       title={Weighted {P}lancherel estimates and sharp spectral multipliers
  for the {G}rushin operators},
        date={2012},
        ISSN={1073-2780},
     journal={Math. Res. Lett.},
      volume={19},
      number={5},
       pages={1075\ndash 1088},
         url={https://doi.org/10.4310/MRL.2012.v19.n5.a9},
      review={\MR{3039831}},
}

\bib{MueSt94}{article}{
      author={M{\"{u}}ller, Detlef},
      author={Stein, Elias~M.},
       title={On spectral multipliers for {H}eisenberg and related groups},
        date={1994},
        ISSN={0021-7824},
     journal={J. Math. Pures Appl. (9)},
      volume={73},
      number={4},
       pages={413\ndash 440},
      review={\MR{1290494}},
}

\bib{Mue04}{misc}{
      author={M{\"{u}}ller, Detlef},
       title={{M}arcinkiewicz multipliers and multi-parameter structure on
  {H}eisenberg groups},
        date={2004},
        note={Lecture notes, Padova},
}

\bib{Mue90}{article}{
      author={M{\"{u}}ller, Detlef},
       title={A restriction theorem for the {H}eisenberg group},
        date={1990},
        ISSN={0003-486X},
     journal={Ann. of Math. (2)},
      volume={131},
      number={3},
       pages={567\ndash 587},
         url={https://doi.org/10.2307/1971471},
      review={\MR{1053491}},
}

\bib{Ni20}{misc}{
      author={Niedorf, Lars},
       title={Weighted restriction type estimates and spectral multipliers for
  {G}rushin operators},
        date={2021},
        note={Master's thesis, Christian-Albrechts-Universit\"{a}t zu Kiel},
}

\bib{Ni21}{article}{
      author={Niedorf, Lars},
       title={A {$p$}-specific spectral multiplier theorem with sharp
  regularity bound for {G}rushin operators},
        date={2022},
        ISSN={0025-5874},
     journal={Math. Z.},
      volume={301},
      number={4},
       pages={4153\ndash 4173},
         url={https://doi.org/10.1007/s00209-022-03029-0},
      review={\MR{4449743}},
}

\bib{Nie23b}{misc}{
      author={Niedorf, Lars},
       title={Restriction type estimates and spectral multipliers on
  {M}\'etivier groups},
        date={2023},
  note={\href{https://arxiv.org/abs/2304.12960}{https://arxiv.org/abs/2304.12960}},
}

\bib{Se86}{article}{
      author={Seeger, Andreas},
       title={On quasiradial {F}ourier multipliers and their maximal
  functions},
        date={1986},
        ISSN={0075-4102,1435-5345},
     journal={J. Reine Angew. Math.},
      volume={370},
       pages={61\ndash 73},
         url={https://doi.org/10.1515/crll.1986.370.61},
      review={\MR{852510}},
}

\bib{SeSo89}{article}{
      author={Seeger, Andreas},
      author={Sogge, Christopher~D.},
       title={On the boundedness of functions of (pseudo-) differential
  operators on compact manifolds},
        date={1989},
        ISSN={0012-7094},
     journal={Duke Math. J.},
      volume={59},
      number={3},
       pages={709\ndash 736},
         url={https://doi.org/10.1215/S0012-7094-89-05932-2},
      review={\MR{1046745}},
}

\bib{Ta99}{article}{
      author={Tao, Terence},
       title={The {B}ochner-{R}iesz conjecture implies the restriction
  conjecture},
        date={1999},
        ISSN={0012-7094},
     journal={Duke Math. J.},
      volume={96},
      number={2},
       pages={363\ndash 375},
         url={https://doi.org/10.1215/S0012-7094-99-09610-2},
      review={\MR{1666558}},
}

\bib{Th93}{book}{
      author={Thangavelu, Sundaram},
       title={Lectures on {H}ermite and {L}aguerre expansions},
      series={Mathematical Notes},
   publisher={Princeton University Press, Princeton, NJ},
        date={1993},
      volume={42},
        ISBN={0-691-00048-4},
      review={\MR{1215939}},
}

\bib{To79}{inproceedings}{
      author={Tomas, Peter~A.},
       title={Restriction theorems for the {F}ourier transform},
        date={1979},
   booktitle={Harmonic analysis in {E}uclidean spaces},
      series={Proc. Sympos. Pure Math., vol. XXXV, Part 1},
   publisher={Amer. Math. Soc., Providence, RI},
       pages={111\ndash 114},
      review={\MR{545245}},
}

\bib{VaSaCo92}{book}{
      author={Varopoulos, Nicholas~T.},
      author={Saloff-Coste, Laurent},
      author={Coulhon, Thierry},
       title={Analysis and geometry on groups},
      series={Cambridge Tracts in Mathematics},
   publisher={Cambridge University Press, Cambridge},
        date={1992},
      volume={100},
        ISBN={0-521-35382-3},
      review={\MR{1218884}},
}

\end{biblist}
\end{bibdiv}

\end{document}